\documentclass{amsart}

\usepackage{hyperref}

\usepackage{amssymb}
\usepackage{amsmath}
\usepackage{amsthm}
\usepackage{enumerate}
\usepackage{graphicx}
\usepackage{tikz-cd}

\theoremstyle{plain}
\newtheorem{theorem}{Theorem}
\newtheorem{proposition}[theorem]{Proposition}
\newtheorem{lemma}[theorem]{Lemma}
\newtheorem{corollary}[theorem]{Corollary}

\newtheorem*{conj}{Conjecture}
\newtheorem{conjecture}[theorem]{Conjecture}
\newtheorem*{cor}{Corollary}

\theoremstyle{definition}
\newtheorem*{definition}{Definition}
\newtheorem*{example}{Example}

\theoremstyle{remark}

\newcommand{\RiemannSphere}{\widehat{\mathbb{C}}}

\renewcommand {\tilde} {\widetilde}

\begin{document}

\title{Removability, rigidity of circle domains and Koebe's Conjecture}

\date{November 23, 2015}

\author[M. Younsi]{Malik Younsi}
\thanks{Supported by NSERC}
\address{Department of Mathematics, Stony Brook University, Stony Brook, NY 11794-3651, United States.}
\email{malik.younsi@gmail.com}

\keywords{Circle domains, Koebe's conjecture, rigidity, removability, conformal, quasiconformal, Schottky groups.}
\subjclass[2010]{primary 30C20, 30C35; secondary 30C62, 30F40.}

\begin{abstract}
A circle domain $\Omega$ in the Riemann sphere is conformally rigid if every conformal map of $\Omega$ onto another circle domain is the restriction of a M\"{o}bius transformation. We show that two rigidity conjectures of He and Schramm are in fact equivalent, at least for a large family of circle domains. The proof follows from a result on the removability of countable unions of certain conformally removable sets. We also introduce trans-quasiconformal deformation of Schottky groups to prove that a circle domain is conformally rigid if and only if it is quasiconformally rigid, thereby providing new evidence for the aforementioned conjectures.
\end{abstract}

\maketitle

\section{Introduction}

Let $\Omega$ be a domain in the Riemann sphere $\RiemannSphere$. We say that $\Omega$ is a \textit{circle domain} if every connected component of its boundary is either a circle or a point. Circle domains appear naturally in the theory of conformal representation of planar domains. Indeed, a classical theorem of Koebe \cite{KOE1} states that any finitely connected domain is conformally equivalent to a circle domain, unique up to M\"{o}bius equivalence. This can be viewed as a generalization of the celebrated Riemann mapping theorem. Koebe's original proof was based on a continuity method using Brouwer's invariance of domain theorem, see e.g. \cite[Chapter V, Section 5]{GOL}. He later suggested several other proofs of the result, including one based on an iteration process whose convergence was finally proved by Gaier \cite{GAI}.

Koebe had earlier conjectured in 1909 that the result holds without any assumption on the connectivity of the domain. This is known as Koebe's Kreisnormierungsproblem.

\begin{conj}[Koebe \cite{KOE2}]
Any domain in $\RiemannSphere$ is conformally equivalent to a circle domain.
\end{conj}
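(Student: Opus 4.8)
I would attack the conjecture by approximation, reducing to the finitely connected case where Koebe's theorem \cite{KOE1} already applies. Enumerate the complementary components of $\Omega$ and, fixing once and for all one nondegenerate component $E_0$, let $\Omega_n$ be the domain obtained by filling in every complementary component except $E_0, E_1, \dots, E_n$. Each $\Omega_n$ is finitely connected, so there is a conformal map $f_n \colon \Omega_n \to D_n$ onto a circle domain, unique up to post-composition with a M\"{o}bius transformation. I would normalize $f_n$ by requiring the image of $E_0$ to be a fixed round circle and fixing one further boundary point, so that $\{f_n\}$ becomes a normal family on $\Omega$; passing to a subsequence gives $f_n \to f$ locally uniformly.

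The first substantive step is to verify that the limit $f$ is conformal, i.e.\ nonconstant and injective. This should follow from quantitative distortion estimates for conformal maps between circle domains: the fixed round circle bounding $E_0$ prevents collapse, and length--area and modulus arguments in the spirit of He and Schramm control the relative sizes and the nesting of the components $E_1, E_2, \dots$, so that no two points of $\Omega$ are identified in the limit, and $f(\Omega)$ is an open subset of $\RiemannSphere$.

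The decisive step is to show that $f(\Omega)$ is again a circle domain. For a single complementary component $E_j$ this is automatic: once $j \le n$, the image $f_n(E_j)$ is a genuine round circle, and these images converge as $n \to \infty$. The real problem is to carry this out uniformly over all $j$ while ruling out degeneration --- that infinitely many of the limiting circles shrink to points in a way that makes $f$ fail to be a homeomorphism onto its image, and, more seriously, that the \emph{residual set} (the closure of the union of the not-yet-frozen components) acquires enough mass to destroy conformality of the limit across it. This is exactly where the argument, which does go through when $\Omega$ has at most countably many complementary components (He--Schramm), meets genuine difficulty: the residual set can be an uncountable, Cantor-type compact set of positive area.

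Accordingly, I expect the main obstacle to be a removability statement. One needs that the union of the complementary components of $\Omega$ outside any prescribed finite collection is a conformally removable set, so that the locally defined limit extends across it and the circle-domain structure is preserved in passing to the limit; absent this, the limiting map need not even be well defined as a conformal map of all of $\Omega$. This is precisely the circle of ideas developed in the present paper --- removability of countable unions of certain conformally removable sets, and its link with the He--Schramm rigidity conjectures --- so I anticipate that any proof of the full conjecture must route through, or be equivalent to, such a removability and rigidity statement, with the hard analytic content lying there rather than in the soft compactness argument.
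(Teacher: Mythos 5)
There is no proof to compare against: the statement you were given is Koebe's Kreisnormierungsproblem, which the paper records explicitly as an \emph{open conjecture} (``Despite some important partial progress, the conjecture remains open''). The paper proves no such theorem, and neither do you. What you have written is an accurate sketch of the classical exhaustion strategy --- approximate $\Omega$ by finitely connected domains $\Omega_n$, uniformize each by Koebe's finite-connectivity theorem, normalize, and pass to a locally uniform limit --- together with a correct diagnosis of where that strategy breaks. But a correct diagnosis of the obstruction is not a proof; every step you label ``substantive'' or ``decisive'' is left unestablished, and those are precisely the steps nobody knows how to carry out in general.

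To be concrete about why the gap is genuine and not merely technical: the exhaustion scheme is essentially the He--Schramm argument, and it succeeds for at most countably many boundary components because a Baire category / transfinite induction argument controls the residual set, and because circle domains with countably many boundary components are known to be \emph{rigid}, which is what makes the normalization of the $f_n$ and the comparison of successive approximants work. Both ingredients fail in general. First, the residual set (the closure of the union of the not-yet-frozen complementary components) can be a Cantor set of positive area, hence not conformally removable, so the limit map need not be conformal --- or even well defined as a single-valued conformal map --- across it; removability of the relevant boundary sets is exactly condition \textbf{(B)} of the Rigidity Conjecture, which is itself open. Second, circle domains with uncountably many boundary components need not be rigid (the paper gives complements of non-removable Cantor sets as counterexamples), so the uniqueness-up-to-M\"{o}bius normalization you invoke for the $D_n$ does not persist in the limit and cannot be used to force convergence. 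Your closing sentence concedes that the ``hard analytic content'' lies elsewhere; that content is the conjecture, and it remains unproved.
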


Despite some important partial progress, the conjecture remains open. The following result of He and Schramm is undoubtedly one of the major recent advances.

\begin{theorem}[He--Schramm \cite{SCH1}]
\label{KoebeCountable}
Any domain in $\RiemannSphere$ with at most countably many boundary components is conformally equivalent to a circle domain, unique up to M\"{o}bius equivalence.
\end{theorem}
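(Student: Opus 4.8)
The plan is to prove existence and uniqueness separately, each time reducing to the finitely connected case quoted above. For existence, I would first apply a M\"obius transformation so that $\infty\in\Omega$, then enumerate the connected components $K_1,K_2,\dots$ of $\RiemannSphere\setminus\Omega$; if there are only finitely many we are done at once by Koebe's theorem, so suppose there are infinitely many and set $\Omega_n=\RiemannSphere\setminus(K_1\cup\cdots\cup K_n)$. One checks that $\Omega_n$ is connected, hence a finitely connected domain (it is $n$-connected), and that $\Omega_1\supseteq\Omega_2\supseteq\cdots$ with $\bigcap_n\Omega_n=\Omega$. Koebe's theorem then supplies, for each $n$, a conformal map $f_n$ of $\Omega_n$ onto a circle domain; normalizing it hydrodynamically at $\infty$ (so that $f_n(\infty)=\infty$ and $f_n(z)=z+O(1/z)$ there) pins $f_n$ down uniquely, by the uniqueness part of that theorem. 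The real work is to pass to a subsequence along which $f_n\to f$ locally uniformly on $\Omega$, with $f$ conformal and $f(\Omega)$ a circle domain. Normality of $(f_n)$ on $\Omega$ should come from an area estimate: $\Omega$ contains a fixed round neighbourhood of $\infty$ on which every $f_n$ is univalent and hydrodynamically normalized, so the classical area theorem bounds the Laurent coefficients of $f_n(z)-z$ uniformly in $n$, and univalence on $\Omega$ propagates this into local equicontinuity; the limit $f$ is non-constant (asymptotic to $z$ at $\infty$), hence univalent by Hurwitz's theorem. Finally, every complementary component of $f(\Omega)$ arises as a limit of round disks $f_m(K_j)$ ($m\ge j$), so is itself a round disk or a point --- \emph{provided} no $K_j$ degenerates, i.e.\ provided $\operatorname{diam} f_m(K_j)$ stays bounded away from $0$ as $m\to\infty$. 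Establishing this no-degeneration property, by bounding from below, uniformly in $m$, the conformal moduli of the annular collars that separate the $K_j$ inside $\Omega_m$, is the main obstacle of the existence half.

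For uniqueness, suppose $f\colon\Omega\to\Omega^*$ is a conformal map between two circle domains, each with at most countably many boundary components; we want $f$ to be the restriction of a M\"obius transformation. Across every boundary circle $C$ of $\Omega$, $f$ continues by Schwarz reflection: writing $\sigma_C$ and $\sigma_{C^*}$ for the reflections in $C$ and in the circle $C^*=f(C)$, one extends $f$ to $\sigma_C(\Omega)$ by $\sigma_{C^*}\circ f\circ\sigma_C$. Iterating over all the circles of $\partial\Omega$ generates the reflection group $G=\langle\sigma_C\rangle$ --- a free product of copies of $\mathbb{Z}/2$, by a ping-pong argument, since the circles bound pairwise disjoint round disks --- together with its isomorphic image $G^*=\langle\sigma_{C^*}\rangle$ and a $G$-equivariant conformal map $\widetilde f$ of the domain of discontinuity $\Omega_\infty$ of $G$ onto that of $G^*$. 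A geometric argument --- that successive generations of disks, and their $\widetilde f$-images, shrink to points --- extends $\widetilde f$ to a homeomorphism of $\RiemannSphere$ conjugating $G$ to $G^*$; everything then comes down to showing that this homeomorphism is globally conformal, i.e.\ that the limit set $\Lambda=\RiemannSphere\setminus\Omega_\infty$ is \emph{conformally removable} --- every homeomorphism of $\RiemannSphere$ that is conformal off $\Lambda$ is a M\"obius transformation. Granting this, $\widetilde f$, hence $f$, is M\"obius.

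The removability of $\Lambda$ is the crux of the whole proof, and the place where countability is used decisively. Taken one at a time, a round circle is conformally removable (being rectifiable), and so is a compact countable set (which accounts for the point components and the isolated points of $\Lambda$); but $\Lambda$ is the accumulation set of the countable family of reflected boundary circles $\{g(C):g\in G,\ C\subseteq\partial\Omega\}$, and a countable union of conformally removable sets need not be removable. The idea is to exploit the hierarchy supplied by $G$: running a transfinite induction on the generations of reflections, one adjoins at each successor stage the newly reflected (removable) circles to the part already shown removable by means of a lemma of the type \emph{a union of conformally removable compact sets that is locally finite away from an already-removable set is itself removable}, and one passes to closures at the limit stages, where the accumulation producing $\Lambda$ occurs. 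Carrying out this transfinite bookkeeping is the technical heart of the matter --- and it is precisely the circle of ideas about removability of countable unions of conformally removable sets that the rest of the paper develops.
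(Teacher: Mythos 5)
The paper does not prove this theorem; it quotes it from He--Schramm \cite{SCH1} and describes their argument as a transfinite induction on the boundary components combined with an analysis of fixed-point indices, in which --- and the paper singles this out as the ``striking feature'' --- the \emph{uniqueness} statement is an essential ingredient of the \emph{existence} proof. Your proposal takes the opposite, classical route, and in both halves the step you yourself flag as ``the main obstacle'' is not a technical loose end but the entire content of the theorem; the means you propose for overcoming it would fail. On existence: the exhaustion-and-normal-families scheme is precisely the pre-1993 attempt, and it breaks where you say it does. There is no uniform lower bound on the moduli of annular collars separating a given $K_j$ from the other components inside $\Omega_m$, because the components of a countable complement can accumulate on one another (every boundary point can be an accumulation point of infinitely many components, as in the example of Section \ref{Sec2} of this paper), so those moduli genuinely tend to $0$. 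Worse, even granting subsequential Hausdorff convergence of each complementary disk $\overline{D}_{m,j}$ of $f_m(\Omega_m)$ to a round disk or point, the complement of the limit domain is not just the union of these limits: it also picks up the accumulation of the continua corresponding to $K_i$ with $i>m$, which are not round at any finite stage, and a complementary component of $f(\Omega)$ can therefore be a limit disk together with extra accumulated boundary, hence neither a circle nor a point. Nothing in the normal-families framework rules this out; this is exactly the degeneration problem that He--Schramm circumvent, rather than solve, by deducing existence at each transfinite stage from rigidity at the earlier stages via the fixed-point index.

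On uniqueness: Schwarz reflection is available only across the \emph{circle} components of $\partial\Omega$, so the group $G$ you build ignores the point components, and the set whose removability you need is not merely the limit set of $G$ but its union with the $G$-orbit closure of the point components. More seriously, removability of the limit set of an infinitely generated Schottky reflection group is not a known fact --- it is essentially the content of the Rigidity Conjecture that this paper is about, and your proposed transfinite induction collapses exactly at the limit stages, since removability is not preserved under taking closures or countable unions (the paper stresses that even the union of two removable sets is not known to be removable in general, and presents Theorem \ref{RemovableUnion} as the first nontrivial positive result of this kind, under strong structural hypotheses on the pieces). As written, your uniqueness argument therefore reduces the theorem to an open problem at least as hard as the one the paper studies. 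The reflection-plus-removability strategy is Koebe's proof for \emph{finitely} connected circle domains, where the limit set has absolute area zero; He--Schramm's rigidity proof for countably many components in \cite{SCH1} does not pass through removability of the limit set but again runs on the fixed-point index (their later $\sigma$-finite length result \cite{SCH2} is the analytic refinement in this direction).
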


The proof is based on a transfinite induction argument and an analysis of fixed point indices of mappings. Schramm later gave in \cite{SCH3} a simpler proof using his notion of transboundary extremal length. We also mention that He and Schramm further generalized Theorem \ref{KoebeCountable} to domains whose boundary components are all circles and points except those in a countable and closed subfamily \cite{SCH4}.

The proof of Theorem \ref{KoebeCountable} relies heavily on the Baire category theorem and therefore cannot extend to the case of uncountably many boundary components. In this case, Koebe's conjecture remains wide open, although there are various families of domains with uncountably many boundary components that are known to be conformally equivalent to circle domains, see e.g. \cite{SIB2} and \cite{SCH3}.

A striking feature of He and Schramm's work is that the uniqueness part of Theorem \ref{KoebeCountable} plays a fundamental role in the proof of existence. More precisely, the latter relies on the fact that any conformal map between two circle domains with at most countably many boundary components is the restriction of a M\"{o}bius transformation. This motivates the following definition.

\begin{definition}
A circle domain $\Omega$ in $\RiemannSphere$ is said to be \textit{conformally rigid} if every conformal map of $\Omega$ onto another circle domain is the restriction of a M\"{o}bius transformation.
\end{definition}

For example, any finitely connected circle domain is rigid, as proved by Koebe himself. The proof uses reflections across the boundary circles and the fact that the limit set thereby obtained has absolute area zero and thus is removable for conformal maps on its complement. As for circle domains with countably many boundary components, they were shown to be rigid by He and Schramm \cite{SCH1}. They later proved rigidity of circle domains with $\sigma$-finite length boundaries \cite{SCH2}, which is, as far as we know, the most general rigidity result known.

On the other hand, it is well-known that circle domains with uncountably many boundary components need not be rigid. The simplest examples are complements of Cantor sets that are not conformally removable.

\begin{definition}
Let $E \subset \mathbb{C}$ be compact. We say that $E$ is \textit{conformally removable} if every homeomorphism of $\RiemannSphere$ which is conformal outside $E$ is actually conformal everywhere, hence is a M\"{o}bius transformation.
\end{definition}

It follows from the measurable Riemann mapping theorem (cf. Theorem \ref{MRMT}) that quasicircles are conformally removable whereas sets of positive area are not. Moreover, compact sets of $\sigma$-finite length are well-known to be conformally removable. These results are best possible from the point of view of Hausdorff measure, since there exist removable sets of Hausdorff dimension two and non-removable sets of Hausdorff dimension one. We also mention that there has been much recent interest in conformal removability, mainly because of its applications to holomorphic dynamics and conformal welding. For more information, the reader may consult the survey article \cite{YOU}.

The failure of rigidity for some circle domains appears to be one of the main difficulties in the study of Koebe uniformization. It is believed that the precise understanding of rigidity should provide substantial insight into Koebe's conjecture. Motivated by this, He and Schramm proposed the following characterization.

\begin{conjecture}[Rigidity Conjecture \cite{SCH2}]
\label{RigidityConjecture}
Let $\Omega$ be a circle domain. Then the following are equivalent :
\begin{description}
\item[(A)] $\Omega$ is conformally rigid;
\item[(B)] the boundary of $\Omega$ is conformally removable;
\item[(C)] any Cantor set contained in the boundary of $\Omega$ is conformally removable.
\end{description}
\end{conjecture}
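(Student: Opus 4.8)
A natural strategy is to prove \textbf{(B)} $\Rightarrow$ \textbf{(C)} for free, and to arrange the remaining implications so that the one genuine obstruction is isolated at the end. Indeed \textbf{(B)} $\Rightarrow$ \textbf{(C)} is immediate: a homeomorphism of $\RiemannSphere$ that is conformal off a compact subset $E\subset\partial\Omega$ is, \emph{a fortiori}, conformal off $\partial\Omega$, so \textbf{(B)} even forces every compact subset of $\partial\Omega$ to be conformally removable. It therefore remains to prove \textbf{(C)} $\Rightarrow$ \textbf{(B)}, then \textbf{(B)} $\Rightarrow$ \textbf{(A)}, and finally \textbf{(A)} $\Rightarrow$ \textbf{(B)}. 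The recurring technical input is a removability theorem to the effect that a countable union --- or the closure of a countable union --- of conformally removable compacta lying in sufficiently ``tame'' position is again conformally removable; no such statement can hold with no hypotheses whatsoever, and isolating the configurations for which it does hold is what will confine the argument to a large, but not arbitrary, family of circle domains.

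For \textbf{(C)} $\Rightarrow$ \textbf{(B)} I would decompose the boundary. Since the complementary disks of $\Omega$ are pairwise disjoint, only finitely many of them have diameter exceeding any fixed $\varepsilon>0$; hence $\Omega$ has at most countably many boundary circles, and their union, being a countable union of smooth curves, has $\sigma$-finite length and so is conformally removable. Deleting these circles from $\partial\Omega$ leaves a totally disconnected set, which I would exhaust by Cantor subsets; by hypothesis \textbf{(C)} each of these is conformally removable. It then remains to glue the circle part to the Cantor pieces, that is, to show their union is conformally removable, and this is precisely an instance of the countable-union removability theorem. This gluing is the real content of the step, and the hypotheses needed to perform it are exactly the ones that cut down to the large family.

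For \textbf{(B)} $\Rightarrow$ \textbf{(A)}, let $f\colon\Omega\to\Omega^{*}$ be conformal onto another circle domain. Every boundary circle is locally connected, so Carath\'eodory's theorem extends $f$ to a homeomorphism $\overline\Omega\to\overline{\Omega^{*}}$ carrying boundary circles to boundary circles. Let $\Gamma$ be the group generated by the reflections in the boundary circles of $\Omega$, and $\Gamma^{*}$ the analogous group for $\Omega^{*}$; reflecting repeatedly, one extends $f$ to the $\Gamma$-orbit of $\overline\Omega$ by the rule $f\circ\gamma=\gamma^{*}\circ f$. By the Schwarz reflection principle the resulting map $F$ is conformal on $\RiemannSphere\setminus\Lambda$, where $\Lambda$ is the limit set of $\Gamma$, and $F$ restricts to a homeomorphism of $\RiemannSphere\setminus\Lambda$ onto $\RiemannSphere\setminus\Lambda^{*}$. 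Because the tiles $\gamma(\overline\Omega)$ shrink as one approaches $\Lambda$, one has $\Lambda\subset\overline{\bigcup_{\gamma\in\Gamma}\gamma(\partial\Omega)}$, the closure of a countable union of M\"obius images of the conformally removable set $\partial\Omega$; the countable-union removability theorem then makes $\Lambda$ conformally removable. A normal-families argument extends $F$ continuously across $\Lambda$ to a homeomorphism of $\RiemannSphere$ that is conformal off $\Lambda$, so $F$, and in particular $f$, is a M\"obius transformation.

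The implication \textbf{(A)} $\Rightarrow$ \textbf{(B)} is, I expect, the main obstacle. Suppose $\partial\Omega$ is not conformally removable, and let $\phi$ be a homeomorphism of $\RiemannSphere$ that is conformal off $\partial\Omega$ but not M\"obius. Then $\phi$ restricts to a conformal embedding of $\Omega$, but its image is bounded by Jordan curves rather than circles, so \textbf{(A)} does not apply to $\phi|_\Omega$. One is forced to uniformize $\phi(\Omega)$ onto a circle domain first; but $\phi(\Omega)$ has the same, possibly uncountable, connectivity as $\Omega$, so the existence of such a uniformization is a case of Koebe's conjecture. Moreover, even granted a conformal uniformization $g$ of $\phi(\Omega)$, hypothesis \textbf{(A)} only yields that $g\circ\phi$ agrees with a M\"obius transformation on $\overline\Omega$, and one can deduce that $\phi$ itself is M\"obius only when $\overline\Omega=\RiemannSphere$, that is, when every boundary component of $\Omega$ is a point --- the case $\Omega=\RiemannSphere\setminus E$ with $E$ compact and totally disconnected, for which \textbf{(A)} $\Rightarrow$ \textbf{(B)} reduces exactly to the assertion that $\RiemannSphere\setminus\phi(E)$ is conformally equivalent to a circle domain. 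So even in this simplest case the implication has the depth of Koebe uniformization, and for general circle domains one must in addition control $\phi$ on the complementary disks. My expectation is that this is why the conjecture is still open, and that the route to partial progress is either to restrict to families of domains for which the required uniformization is known, or to pass from conformal to quasiconformal maps --- using the equivalence of conformal and quasiconformal rigidity, provable via trans-quasiconformal deformation of the reflection group $\Gamma$ --- so as to gain the flexibility of quasiconformal uniformizations.
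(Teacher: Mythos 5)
This statement is the He--Schramm Rigidity Conjecture, and the paper does not prove it: it is still open, and the paper only establishes two partial results, namely the equivalence of \textbf{(B)} and \textbf{(C)} when $\partial\Omega$ is a countable union of circles, Cantor sets and singletons (Theorems \ref{MainThm1} and \ref{RemovableUnion}), and the equivalence of conformal and quasiconformal rigidity (Theorem \ref{MainThm2}). Measured against that, your sketch of \textbf{(C)} $\Rightarrow$ \textbf{(B)} is essentially the paper's route, and the ``tame position'' hypothesis you flag is exactly the paper's standing assumption; note that the paper's Rempe-Gillen example shows it can genuinely fail, because the set of point boundary components is only a $G_\delta$ in $\partial\Omega$ and need not be exhaustible by countably many Cantor sets. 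Your closing suggestion of trans-quasiconformal deformation of the reflection group is precisely how the paper proves Theorem \ref{MainThm2}.

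The genuine gaps are in the other two implications. Your proof of \textbf{(B)} $\Rightarrow$ \textbf{(A)} does not close: after reflecting, $F$ is conformal off the limit set $\Lambda$ of the Schottky group, and $\Lambda$ contains the \emph{closure} of the $\Gamma$-orbit of $\partial\Omega$. No ``countable-union removability theorem'' is available at that level of generality --- the paper points out that it is open whether the union of even two removable sets is removable, and its Theorem \ref{RemovableUnion} covers only compact unions of quasicircles, removable Cantor sets and points, not closures of infinite M\"obius orbits of an arbitrary removable set. This is exactly why He and Schramm need $\sigma$-finite length: that hypothesis survives passage to the orbit and its closure, whereas bare removability does not. (Separately, removability of $\Lambda$ only helps once $F$ is known to extend to a homeomorphism of $\RiemannSphere$; that extension across $\Lambda$ is itself an unproven step.) Second, you have the asymmetry backwards in the model case: if $\Omega$ is the complement of a Cantor set $E$ and $\phi$ is a homeomorphism of $\RiemannSphere$ conformal off $E$, then $\phi(\Omega)$ is again the complement of a Cantor set, hence already a circle domain --- no uniformization is needed --- so \textbf{(A)} $\Rightarrow$ \textbf{(B)} is immediate there, as the paper notes right after stating the conjecture. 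The direction that carries the real difficulty, and which Section \ref{Sec5} of the paper identifies as open even for Cantor set complements, is \textbf{(B)} $\Rightarrow$ \textbf{(A)} --- exactly the step where your argument breaks.
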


Note that if $\Omega$ is the complement of a Cantor set $E$, then $\Omega$ is a circle domain and $E$ is conformally removable whenever $\Omega$ is rigid.

The present paper is devoted to the study of Conjecture \ref{RigidityConjecture}. Our first main result is the following.

\begin{theorem}
\label{MainThm1}
Let $\Omega$ be a circle domain whose boundary is the union of countably many circles, Cantor sets and singletons. Then \textbf{\emph{(B)}} and \textbf{\emph{(C)}} are equivalent.
\end{theorem}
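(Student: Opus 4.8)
The implication \textbf{(B)}~$\Rightarrow$~\textbf{(C)} is immediate and uses no special structure: if $K\subset\partial\Omega$ is a Cantor set, then $K$ is compact, and any homeomorphism of $\RiemannSphere$ that is conformal on $\RiemannSphere\setminus K$ is a fortiori conformal on the smaller open set $\RiemannSphere\setminus\partial\Omega$, hence a M\"obius transformation by \textbf{(B)}. So the whole content of the theorem is the implication \textbf{(C)}~$\Rightarrow$~\textbf{(B)}.

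For that direction, write $\partial\Omega=\bigcup_{n}E_{n}$, where each $E_{n}$ is a circle, a Cantor set, or a singleton. Every piece is conformally removable: a round circle is a quasicircle, hence removable by the measurable Riemann mapping theorem (Theorem~\ref{MRMT}); a singleton is trivially removable; and each Cantor set appearing in the decomposition is removable by hypothesis \textbf{(C)}. Thus $\partial\Omega$ is a countable union of conformally removable compacta. The difficulty is that conformal removability is \emph{not} stable under countable unions in general — a non-removable Cantor set of positive area is a countable union of removable singletons — so one cannot conclude naively. The plan is to isolate and prove the following gluing statement, which is exactly where the circle-domain structure enters: \emph{if $F=\bigcup_{n}F_{n}$ is the boundary of a domain in $\RiemannSphere$ and each $F_{n}$ is a conformally removable compact set which is either a round circle bounding a complementary round disk of $F$ or is totally disconnected, then $F$ is conformally removable}. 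Applying this with $F=\partial\Omega$ yields \textbf{(C)}~$\Rightarrow$~\textbf{(B)}.

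To prove the gluing statement, let $h$ be a homeomorphism of $\RiemannSphere$ conformal on $\RiemannSphere\setminus\partial\Omega=\Omega\sqcup\bigsqcup_{i}D_{i}$, where $\{D_{i}\}$ is the at most countable family of round open disks complementary to $\overline{\Omega}$, bounded by the circle components $C_{i}$ of $\partial\Omega$; put $P=\partial\Omega\setminus\bigcup_{i}C_{i}$ for the union of the point components, itself a countable union of removable Cantor sets and singletons. It suffices to show $h$ is conformal across every point of $\partial\Omega$, and once $h$ is known to be conformal across all the $C_{i}$ the points of $P$ are disposed of, since $P$ is then a removable compact subset lying in the domain of an already-conformal homeomorphism. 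To handle the circles I would run a transfinite induction, removing the disks $D_{i}$ one at a time in order of non-increasing diameter: at a successor stage one restricts $h$ to the complement of the closed disks removed so far — across whose boundary circles $h$ is already conformal — and glues across one further circle using the local removability of quasicircles (a homeomorphism conformal off a quasicircle on a neighbourhood of it is conformal there); at a limit stage one uses that the closure of the family of disks removed so far adds only points of the removable set $P$, across which $h$ again extends conformally. Since at most finitely many of the $D_i$ have diameter above any fixed threshold, the "active" configuration at each stage is essentially finite, so the recursion is well founded and terminates with $h$ conformal on all of $\RiemannSphere$, hence M\"obius.

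The main obstacle is precisely the failure of countable stability of conformal removability: the proof cannot be soft and must genuinely exploit the geometry of $\partial\Omega$ — that the non-degenerate pieces are round circles arranged as boundaries of pairwise disjoint complementary disks, so that the limit set of this family of disks is again swallowed by the removable part $P$. The delicate point is the passage through limit ordinals, where $h$ must be glued across an accumulation set of circles; making this limiting step rigorous (via a Baire-category and fixed-point-index argument in the spirit of He--Schramm's proof of Theorem~\ref{KoebeCountable}, or a transboundary extremal-length estimate as in \cite{SCH3}) is the technical heart of the proof.
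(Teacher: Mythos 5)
Your easy direction is fine and your diagnosis of the difficulty (conformal removability is not known to be stable even under unions of two sets) is the right one, but the proof you sketch for \textbf{(C)}~$\Rightarrow$~\textbf{(B)} has a genuine gap, and the recursion you propose does not close it. The successor step already fails as stated: to glue across one further circle $C$ you invoke ``a homeomorphism conformal off a quasicircle on a neighbourhood of it is conformal there,'' but every neighbourhood of $C$ may meet infinitely many other boundary circles and points of $P$, across which $h$ is not yet known to be conformal, so the hypothesis of that lemma is simply not available; the step is circular. The limit stage is worse: the accumulation set of the circles need not lie in $P$ (circles can accumulate on other circles), and even where it does, extending $h$ across $P$ --- a countable union of removable Cantor sets and singletons --- is exactly the countable-union problem you set out to solve, so declaring $P$ ``disposed of'' begs the question; citing fixed-point indices or transboundary extremal length here is a placeholder, not an argument. (A side error: a positive-area Cantor set is uncountable, hence not a countable union of singletons; countable compacta are removable. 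The honest statement is that even the union of two removable sets is not known to be removable.)

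The two ingredients you are missing are the ones the paper uses, and they replace the transfinite recursion entirely (Theorem~\ref{RemovableUnion}). First, a local extension theorem (Theorem~\ref{ThmCextend}): if $E$ is a quasicircle or a totally disconnected conformally removable compact set, then for \emph{any} open $U$ --- not assumed to contain $E$ --- a homeomorphism of $U$ conformal on $U\setminus E$ is conformal on all of $U$; the quasicircle case is Morera's theorem after straightening by a quasiconformal map, and the totally disconnected case uses a clopen splitting of $E$ separating the part well inside $U$ from $E\setminus U$ (Lemma~\ref{LemmaTotDisc}). Second, a Baire category argument that manufactures the open set your induction cannot produce: let $D'$ be the maximal open set on which $h$ is conformal and $E'=\RiemannSphere\setminus D'$. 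If $E'\neq\emptyset$, then since $E'$ is the countable union of the closed sets $\Gamma_j\cap E'$, $C_k\cap E'$, $\{z_l\}\cap E'$, one of these has nonempty interior in the complete metric space $E'$, yielding an open $U$ with $\emptyset\neq U\cap E'$ contained in a \emph{single} piece; then $h$ is conformal on $U$ minus that one piece, Theorem~\ref{ThmCextend} makes $h$ conformal on $U$, and this contradicts the maximality of $D'$.
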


Note that there can be at most countably many circles in the boundary of a circle domain. The assumption that $\partial \Omega$ is the union of countably many circles, Cantor sets and singletons holds whenever the set of point boundary components which are accumulation points of circles is an $F_\sigma$ set. In particular, this occurs if there are only finitely many circles or, more generally, if all point boundary components are isolated from the circles.

We also give an example showing that the condition fails for some circle domains. We do not know whether \textbf{(B)} and \textbf{(C)} are equivalent in this case.

Theorem \ref{MainThm1} is a direct consequence of the following result on unions of certain removable sets, which is of independent interest.

\begin{theorem}
\label{RemovableUnion}
Let $E$ be a compact plane set of the form
$$E=\bigcup_{j=1}^\infty \Gamma_j \cup \bigcup_{k=1}^\infty C_k \cup \bigcup_{l=1}^\infty \{z_l\},$$
where each $\Gamma_j$ is a quasicircle, each $C_k$ is a Cantor set and each $z_l$ is a complex number. Then $E$ is conformally removable if and only if every $C_k$ is conformally removable.
\end{theorem}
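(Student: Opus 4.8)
The plan is to settle the two implications separately, the ``only if'' being essentially free and the ``if'' being the real content. For the ``only if'' direction one uses no structural hypothesis on $E$: if $h$ is a homeomorphism of $\RiemannSphere$ conformal off $C_k$, then, since $\RiemannSphere\setminus E\subseteq\RiemannSphere\setminus C_k$, the map $h$ is a fortiori conformal off $E$, hence M\"obius; thus any compact subset of a conformally removable set is conformally removable, and in particular each $C_k$ is. Since quasicircles are conformally removable (by the measurable Riemann mapping theorem) and singletons are conformally removable (by Riemann's removable singularity theorem), the hypothesis ``every $C_k$ is conformally removable'' is the same as ``every building block of $E$ is conformally removable,'' and the substance is the converse: a countable union, compact in $\RiemannSphere$, of conformally removable quasicircles, Cantor sets and points is conformally removable.

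For that direction I would argue by Baire category on the ``non-conformality set.'' Let $f$ be a homeomorphism of $\RiemannSphere$ conformal on $\RiemannSphere\setminus E$, and let $S$ be the set of points of $\RiemannSphere$ in no neighborhood of which $f$ is conformal; then $S$ is closed and $S\subseteq E$. Assume $S\neq\emptyset$, so $S$ is a nonempty complete metric space, and write
$$S=\bigcup_{j}(S\cap\Gamma_j)\cup\bigcup_{k}(S\cap C_k)\cup\bigcup_{l}(S\cap\{z_l\}),$$
a countable union of relatively closed subsets. By the Baire category theorem one of them has nonempty interior in $S$, so there is an open $W\subseteq\RiemannSphere$ with $S\cap W$ nonempty and contained in a single building block $B$. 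Since $f$ is conformal on $\RiemannSphere\setminus S$, it is conformal on $W\setminus(S\cap W)\supseteq W\setminus B$; hence it suffices to upgrade this to ``$f$ is conformal on all of $W$,'' which contradicts $S\cap W\neq\emptyset$. In other words, everything reduces to a purely \emph{local} removability statement for each of the three types of building block.

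When $B=\{z_l\}$ this is immediate: $f$ is then a bounded holomorphic map on a punctured neighborhood of $z_l$, so it extends holomorphically and $p:=z_l\notin S$. When $B=\Gamma_{j_0}$ is a quasicircle, fix $p\in S\cap W$ and a round disk $D$ with $p\in D\subseteq\overline D\subseteq W$; then $f|_D$ is a homeomorphism conformal off $\Gamma_{j_0}\cap\overline D$, a compact subset of the conformally removable set $\Gamma_{j_0}$, hence conformally removable, and the fact that $f|_D$ is then conformal on $D$ is the classical local removability of quasicircles, which one obtains from the quasiconformal reflection across $\Gamma_{j_0}$ together with the measurable Riemann mapping theorem; so again $p\notin S$. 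The remaining case $B=C_{k_0}$ is where the hypothesis that the $C_k$ are removable does real work. Here I would use total disconnectedness to replace the round disk by a separating smooth Jordan domain: fix $p\in S\cap W$ and choose a smooth Jordan domain $V$ with $p\in V\subseteq\overline V\subseteq W$ and $\partial V\cap C_{k_0}=\emptyset$ (for instance a generic level set of the distance to a small clopen piece of $C_{k_0}$ containing $p$). Since $\partial V$ avoids $C_{k_0}$ and $f$ is conformal off $C_{k_0}$ throughout $W$, the map $f$ is conformal near $\partial V$, so $J:=f(\partial V)$ is a real-analytic Jordan curve, in particular a quasicircle, bounding the Jordan domain $f(V)$. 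Writing $K:=C_{k_0}\cap\overline V$ (a compact, hence conformally removable, subset of $C_{k_0}$) and composing with Riemann maps of $V$ and of $f(V)$ --- maps that are conformal near $K$ and near $f(K)$ --- we may assume $V=\UnitDisk$ and $J=\UnitCircle$, so that we are looking at a homeomorphism $\overline{\UnitDisk}\to\overline{\UnitDisk}$, conformal off a conformally removable Cantor set $\tilde K\subseteq\UnitDisk$, restricting to a circle homeomorphism on $\UnitCircle$. Extending it by Schwarz reflection across $\UnitCircle$ yields a homeomorphism of $\RiemannSphere$ conformal off $\tilde K$ together with its reflection $\tilde K^{*}$; invoking the removability of $\tilde K$ and exploiting the reflection symmetry of the pair $\tilde K\cup\tilde K^{*}$, one concludes that this map is M\"obius, whence $f$ is conformal on $V$ and $p\notin S$. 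In all cases $S\cap W=\emptyset$, a contradiction, so $S=\emptyset$, $f$ is conformal on $\RiemannSphere$, and therefore M\"obius.

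The step I expect to be the main obstacle is the last one in the Cantor case: passing from the global removability of a single Cantor set to the removability of the symmetric pair $\tilde K\cup\tilde K^{*}$ --- equivalently, establishing that conformal removability of a Cantor set is a \emph{local} property, which (unlike for quasicircles and points) is not a routine matter and is exactly the place where the hypothesis is used. Everything else is soft: Baire category, the classical local removability of quasicircles, and the triviality of removing points; so I would devote the bulk of the effort to making the reflection/surgery argument for Cantor sets rigorous, in particular to the (routine but delicate) point that a separating Jordan domain with smooth boundary avoiding $C_{k_0}$ exists and that the various coordinate changes preserve conformal removability of the relevant compact subsets.
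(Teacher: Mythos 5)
Your argument is correct, and its skeleton --- Baire category applied to the closed set $S$ of points where $f$ fails to be locally conformal, reducing everything to a local removability statement for a single building block --- is exactly the paper's (with $S=\RiemannSphere\setminus D'$ in the paper's notation). Where you genuinely diverge is in how that local statement is proved for the Cantor pieces, which you correctly identify as the crux. The paper's Theorem \ref{ThmCextend} handles a totally disconnected removable compact set meeting an open set $U$ by pure point-set topology: Lemma \ref{LemmaTotDisc} splits it into two disjoint closed pieces, one compactly contained in $U$ and one staying away from the disk of interest, after which the easy localization (Proposition \ref{Cextend}, i.e.\ quasiconformal extension across a compact subset of the domain together with the equivalence of conformal and quasiconformal removability) finishes. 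You instead separate by an analytic Jordan curve avoiding $C_{k_0}$, uniformize both sides, and Schwarz-reflect, reducing to removability of $\tilde K\cup\tilde K^{*}$. This works, but two remarks. First, you do not need any ``reflection symmetry'' of the pair: $\tilde K$ and $\tilde K^{*}$ are \emph{disjoint} compact removable sets, and the disjoint union of two removable compacta is removable (the paper records this as a consequence of Propositions \ref{QCextend} and \ref{CHremQCHrem}); that is the clean way to close your argument, and it is also, in disguise, the same mechanism the paper uses after its clopen splitting. Second, your route obliges you to construct the separating smooth curve, extend the Riemann maps quasiconformally to the sphere so as to transport removability of $K$ to $\tilde K$, and invoke Carath\'eodory plus Schwarz reflection --- all true, but noticeably heavier than the clopen decomposition, which needs no geometry at all. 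For the quasicircle pieces, your appeal to quasiconformal reflection is the same standard fact the paper proves by straightening the quasicircle and applying Morera; the ``only if'' direction and the singleton case are identical to the paper's.
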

Note that it is open in general whether the union of two removable sets is removable. As far as we know, Theorem \ref{RemovableUnion} is the first non-trivial result on unions of removable sets.

Our second main result is related to the equivalence of \textbf{(A)} and \textbf{(B)}. It is well-known that in the definition of a conformally removable set, conformal maps can be replaced by quasiconformal mappings; in other words, the notions of conformal removability and quasiconformal removability actually coincide (cf. Proposition \ref{CHremQCHrem}). Therefore, if \textbf{(A)} is equivalent to \textbf{(B)}, then conformal rigidity should in principle be equivalent to quasiconformal rigidity. We show that it is indeed the case.

\begin{theorem}
\label{MainThm2}
A circle domain $\Omega$ is conformally rigid if and only if it is quasiconformally rigid.
\end{theorem}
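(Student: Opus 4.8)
The plan is to translate the statement into one about deformations of the reflection (Schottky‑type) groups attached to the two circle domains, and to run a trans‑quasiconformal deformation argument. One implication is immediate: a conformal homeomorphism is $1$‑quasiconformal, so quasiconformal rigidity forces conformal rigidity. For the converse I would assume $\Omega$ conformally rigid. To a circle domain $\Omega$ with boundary circles $C_1,C_2,\dots$ one associates the group $G=G(\Omega)$ of Möbius and anti‑Möbius transformations generated by the reflections $\sigma_j$ in the $C_j$; since the complementary round disks are pairwise disjoint, the Klein combination theorem gives that $\overline{\Omega}$ is a fundamental domain for the action of $G$ on its domain of discontinuity $\Omega(G)=\RiemannSphere\setminus\Lambda(G)$, tiled by the copies $\gamma(\overline{\Omega})$, $\gamma\in G$. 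A conformal or quasiconformal homeomorphism $f$ of $\Omega$ onto another circle domain $\Omega'$ matches the boundary circles of $\Omega$ with those of $\Omega'$, hence extends by iterated reflection to a homeomorphism $\widehat{f}\colon\Omega(G)\to\Omega(G')$ conjugating $G$ to $G'=G(\Omega')$; its complex dilatation on a tile $\gamma(\Omega)$ is the $\gamma$‑transport of $\mu_f$, and the tile boundaries form a measure‑zero union of circles, so $\widehat{f}$ is quasiconformal on $\Omega(G)$, and conformal on all of $\Omega(G)$ precisely when $f$ is conformal on $\Omega$ (point boundary components play no role in $G$ and are simply carried along homeomorphically).

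The core construction is a deformation of $G$ that remembers the circle‑domain structure. Given a Beltrami coefficient $\mu$ on $\Omega$ with $\|\mu\|_\infty<1$, spread it $G$‑equivariantly over $\Omega(G)$ and set it equal to $0$ on $\Lambda(G)$; the resulting $\widehat{\mu}$ is a $G$‑invariant Beltrami coefficient on $\RiemannSphere$, and by the measurable Riemann mapping theorem there is a quasiconformal homeomorphism $\Phi^{\mu}$ of $\RiemannSphere$ with $\mu_{\Phi^{\mu}}=\widehat{\mu}$, normalized at three points. Uniqueness of normalized solutions forces $\Phi^{\mu}$ to conjugate $G$ to a discrete group $G^{\mu}$ of Möbius and anti‑Möbius maps; and since $\Phi^{\mu}$ conjugates each $\sigma_j$ to an anti‑conformal involution of $\RiemannSphere$ whose fixed‑point set is the Jordan curve $\Phi^{\mu}(C_j)$, and an anti‑conformal involution of $\RiemannSphere$ with a curve of fixed points is reflection in a round circle, the curve $\Phi^{\mu}(C_j)$ is a circle. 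Hence $G^{\mu}$ is again a reflection group, $\Phi^{\mu}(\Omega)$ is a circle domain, and $\Phi^{\mu}|_{\Omega}$ has dilatation $\mu$. When $\Lambda(G)$ has positive area one can deform similarly using Beltrami coefficients carried on $\Lambda(G)$ itself; and when $\Lambda(G)$ has zero area there is still a wider family of \emph{trans‑quasiconformal} deformations of $G$, obtained as locally uniform limits of the deformations just described, which stay conformal on $\Omega(G)$ and conjugate $G$ to reflection groups but need not be quasiconformal across $\Lambda(G)$. The key point is that the reflected extension $\widehat{f}$ of a conformal map $f$ of $\Omega$ onto a circle domain is exactly such a trans‑quasiconformal deformation of $G$.

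With this dictionary I would finish as follows: given a quasiconformal $f\colon\Omega\to\Omega'$ that might violate quasiconformal rigidity, transport $\mu_f$ through $\widehat{f}$ and present the situation via the family $\Phi^{\mu}$, to which quasiconformal rigidity of $\Omega$ applies; pushing this through the trans‑quasiconformal limit forces the deformation realizing $f$ to be trivial, so $f$ is conformal, and conformal rigidity of $\Omega$ then makes it the restriction of a Möbius transformation. The hard part — and the reason a plain invocation of the measurable Riemann mapping theorem does not suffice — is precisely the case $\Lambda(G)$ of zero area: there the relevant deformation is not given by any bounded Beltrami coefficient, and it must instead be produced as a limit of genuine quasiconformal deformations while keeping simultaneous control of (i) the normalization, so the limiting homeomorphism is nondegenerate rather than constant; (ii) the $G$‑equivariance, so the limiting group is again a circle‑reflection group and the limiting image is again a circle domain; and (iii) the dilatation, so the limit still restricts to a conformal map on $\Omega$. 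Making these three controls precise and compatible — the trans‑quasiconformal deformation theory of Schottky groups — is the technical heart of the argument.
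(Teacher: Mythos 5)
The genuine gap is your opening claim that ``a conformal homeomorphism is $1$-quasiconformal, so quasiconformal rigidity forces conformal rigidity.'' That implication is not immediate, and it is in fact the direction where all the new technology of the paper is needed. Quasiconformal rigidity applied to a conformal $f:\Omega\to f(\Omega)$ only yields an extension of $f$ to a quasiconformal homeomorphism $g$ of $\RiemannSphere$; it does not yield a M\"{o}bius transformation, and $g$ is in general far from M\"{o}bius (its dilatation on the complementary disks and on $\partial\Omega$ is unconstrained). If $\partial\Omega$ had positive area, $g$ could carry nontrivial dilatation there and there would be no reason for $f$ to be the restriction of a M\"{o}bius map. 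This is exactly where the paper's trans-quasiconformal deformation enters: Lemma \ref{ZeroArea} shows that if $m(\partial\Omega)>0$ one can place a strongly David coefficient near a Lebesgue density point of the set $P$ of point components, extend it invariantly under the Schottky group, and solve to obtain a map that is conformal on $\Omega$, sends $\Omega$ onto a circle domain, yet admits no quasiconformal extension to the sphere (its coefficient exceeds every $k<1$ on a set of positive area); hence quasiconformal rigidity forces $m(\partial\Omega)=0$. Even granting that, one still needs the He--Schramm lemma (Lemma \ref{LemmaHS}) to identify $g$ on $\Omega$ with $T\circ h$, where $h$ solves the invariantly extended Beltrami equation for $\mu_g|_{\Omega\cup P}$, which is now $0$ a.e., so $h$ and therefore $f$ are M\"{o}bius. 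Your sketch assigns the David-map/limiting machinery to the wrong implication and never confronts the positive-area obstruction, which is the actual technical heart of the theorem.

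Your other direction, conformal rigidity $\Rightarrow$ quasiconformal rigidity, is in substance the paper's argument: spread the Beltrami coefficient of the quasiconformal $f$ invariantly over the Schottky orbit, solve with the measurable Riemann mapping theorem to get $g$, note that $g$ conjugates reflections to reflections so the image is again a circle domain and $g\circ f$ is conformal on $\Omega$, hence M\"{o}bius by conformal rigidity, whence $f=g^{-1}\circ(g\circ f)$ extends quasiconformally. Your closing paragraph muddles this, however: you invoke quasiconformal rigidity of $\Omega$ while in the middle of proving it, and you conclude that ``$f$ is conformal,'' which a quasiconformal map onto a circle domain certainly need not be; the correct conclusion is only that $f$ extends quasiconformally to the sphere.
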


The proof requires a new technique involving David maps, which we refer to as trans-quasiconformal deformation of Schottky groups. It is quite reminiscent of trans-quasiconformal surgery in holomorphic dynamics (see e.g. \cite[Chapter 9]{BRF}).

An immediate consequence of Theorem \ref{MainThm2} is that rigid circle domains are invariant under quasiconformal mappings of the sphere.

\begin{corollary}
\label{InvQCcircle}
Let $\Omega$ be a circle domain and let $f$ be a quasiconformal mapping of the sphere which maps $\Omega$ onto another circle domain $f(\Omega)$. If $\Omega$ is conformally rigid, then $f(\Omega)$ is also conformally rigid.
\end{corollary}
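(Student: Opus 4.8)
The plan is to derive the statement from Theorem~\ref{MainThm2}. Write $\Omega_1=f(\Omega)$ and let $g\colon\Omega_1\to\Omega'$ be a conformal map onto a circle domain $\Omega'$; the goal is to show that $g$ is the restriction of a M\"obius transformation. One cannot simply compose with $f$ and appeal to the conformal rigidity of $\Omega$, for $f$ is only quasiconformal and $g\circ f$ is then a quasiconformal, not conformal, homeomorphism of $\Omega$ onto $\Omega'$, so nothing is gained. (If $f$ does happen to be conformal on $\Omega$ the corollary is immediate: $f\colon\Omega\to\Omega_1$ is then a conformal map between circle domains, hence M\"obius, so $\Omega_1$ is a M\"obius image of $\Omega$.) The idea in general is to replace $f$ by a quasiconformal homeomorphism of $\RiemannSphere$ which still sends circles to circles, and to transfer rigidity along it.

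Let $\mu$ be the complex dilatation of $f$ on $\Omega$, let $\Gamma$ be the group generated by the reflections in the boundary circles of $\Omega$ (the Schottky group of $\Omega$ in the sense of He--Schramm), and extend $\mu$ to a $\Gamma$-invariant Beltrami coefficient $\hat\mu$ on $\RiemannSphere$ by pushing it forward under $\Gamma$ and setting it equal to $0$ on the limit set; then $\|\hat\mu\|_\infty=\|\mu\|_\infty<1$. Let $\Phi$ be a quasiconformal homeomorphism of $\RiemannSphere$ with complex dilatation $\hat\mu$, furnished by the measurable Riemann mapping theorem. By equivariance, $\Phi\Gamma\Phi^{-1}$ consists of M\"obius and anti-M\"obius transformations; each reflection is conjugated to an anti-conformal involution of $\RiemannSphere$ with infinitely many fixed points, hence to a reflection in a circle, so $\Phi$ carries the boundary circles of $\Omega$ to circles and $\Omega_2:=\Phi(\Omega)$ is again a circle domain. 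A routine computation with complex dilatations --- using that $g$ is conformal on $\Omega_1$ and that $\hat\mu=\mu$ on $\Omega$ --- shows that both $f\circ\Phi^{-1}$ and $g\circ f\circ\Phi^{-1}$ are conformal on $\Omega_2$, so that $g=(g\circ f\circ\Phi^{-1})\circ(f\circ\Phi^{-1})^{-1}$ exhibits $g$ as a composition of conformal maps between circle domains which factors through $\Omega_2$. If $\Omega_2$ is conformally rigid, then $g\circ f\circ\Phi^{-1}\colon\Omega_2\to\Omega'$ and $f\circ\Phi^{-1}\colon\Omega_2\to\Omega_1$ are both restrictions of M\"obius transformations, and hence so is $g$. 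Thus the corollary is reduced to the conformal rigidity of $\Omega_2$.

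The remaining step --- passing from rigidity of $\Omega$ to rigidity of $\Omega_2=\Phi(\Omega)$, knowing only that $\Phi$ is a quasiconformal deformation of the Schottky group $\Gamma$ --- is the main obstacle, and the reason the corollary does not follow formally from Theorem~\ref{MainThm2}. Quasiconformal homeomorphisms need not preserve conformal removability, so rigidity cannot be transported along an arbitrary quasiconformal map; one must exploit that $\Phi$ is equivariant and, crucially, that $\hat\mu$ vanishes on the limit set of $\Gamma$. This is precisely the situation addressed by the trans-quasiconformal deformation of Schottky groups behind Theorem~\ref{MainThm2}: starting from a hypothetical non-M\"obius conformal map out of $\Omega_2$, one extends it through the reflection group to a homeomorphism conformal off the limit set of $\Phi\Gamma\Phi^{-1}$, conjugates the resulting equivariant configuration back by $\Phi$, and repairs it across the limit set by an equivariant David-map correction in the style of trans-quasiconformal surgery, thereby producing a quasiconformal homeomorphism of $\RiemannSphere$ that witnesses a failure of rigidity of $\Omega$ --- impossible, since $\Omega$ is conformally, hence by Theorem~\ref{MainThm2} quasiconformally, rigid. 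Carrying out the David correction so that it remains equivariant and preserves the circle-domain structure, while keeping track of the complex dilatations under the conjugation by $\Phi$, is the delicate part; granting it, Corollary~\ref{InvQCcircle} follows.
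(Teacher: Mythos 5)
There is a genuine gap, and the difficulty you single out is illusory: the corollary really is a formal consequence of Theorem \ref{MainThm2}, applied twice. Since $\Omega$ is conformally rigid, Theorem \ref{MainThm2} makes it quasiconformally rigid. Now take \emph{any} quasiconformal map $g$ of $f(\Omega)$ onto a circle domain $\Omega'$. Then $g\circ f$ is a quasiconformal map of $\Omega$ onto the circle domain $\Omega'$, hence by quasiconformal rigidity of $\Omega$ it is the restriction of a quasiconformal homeomorphism $G$ of $\RiemannSphere$; consequently $g=G\circ f^{-1}$ on $f(\Omega)$ is the restriction of the global quasiconformal map $G\circ f^{-1}$. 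Thus $f(\Omega)$ is quasiconformally rigid, and a second application of Theorem \ref{MainThm2} shows it is conformally rigid. The step you dismissed at the outset --- ``nothing is gained'' by composing with $f$ because $g\circ f$ is only quasiconformal --- is precisely where the quasiconformal rigidity of $\Omega$ is meant to be used, and the reason one is allowed to test $f(\Omega)$ against quasiconformal rather than conformal maps $g$ is again Theorem \ref{MainThm2}. No Schottky-group machinery is needed beyond what is already packaged inside that theorem.

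Your alternative route, as written, does not close. After replacing $f$ by the equivariant map $\Phi$, you have reduced the corollary for the pair $(\Omega,f)$ to the conformal rigidity of $\Omega_2=\Phi(\Omega)$, where $\Phi$ is itself a quasiconformal homeomorphism of the sphere carrying the circle domain $\Omega$ onto a circle domain; that is an instance of the very statement being proved, so the reduction is circular unless the extra structure of $\Phi$ is genuinely exploited. The final paragraph, which is supposed to supply this via an equivariant David-type correction, is only a sketch: no correcting coefficient is constructed, no verification is given that it satisfies a (strongly) David condition, and no argument is made that the resulting map is quasiconformal (rather than merely David) so as to contradict the quasiconformal rigidity of $\Omega$. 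As it stands the proof is incomplete, and the missing step is harder than the corollary itself.
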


Since conformally removable sets are also quasiconformally invariant (cf. Corollary \ref{InvQC}), this provides more evidence in favor of Conjecture \ref{RigidityConjecture}.

The paper is structured as follows. Section \ref{Sec1} contains some preliminaries on quasiconformal mappings and conformally removable sets. The main result of this section is a conformal extension result which is needed for the proof of Theorem \ref{RemovableUnion}, detailed in Section \ref{Sec2}. In Section \ref{Sec3}, we introduce David maps and trans-quasiconformal deformation of Schottky groups. Section \ref{Sec4} contains the proof of Theorem \ref{MainThm2}. Finally, we conclude the paper with Section \ref{Sec5} and the discussion of a possible approach to the implication \textbf{(B)} $\Rightarrow$ \textbf{(A)} in Conjecture \ref{RigidityConjecture}.

\section{Preliminaries and conformal extension}
\label{Sec1}
The first part of this section consists of a very brief introduction to quasiconformal mappings. For more information, we refer the reader to \cite{LEH}.

First, we start with the analytic definition of quasiconformal mappings.

\begin{definition}
Let $K \geq 1$, let $U,V$ be domains in $\RiemannSphere$ and let $f:U \to V$ be an orientation-preserving homeomorphism. We say that $f$ is $K$-\textit{quasiconformal} if it belongs to the Sobolev space $W_{loc}^{1,2}(U)$ and satisfies the Beltrami equation
$$\partial_{\overline{z}}f=\mu \, \partial_z f$$
almost everywhere on $U$, for some measurable function $\mu : U \to \mathbb{D}$ with $\|\mu\|_\infty \leq \frac{K-1}{K+1}$. In this case, the function $\mu$ is called the \textit{Beltrami coefficient} of $f$ and is denoted by $\mu_f$.
\end{definition}

A mapping is conformal if and only if it is $1$-quasiconformal. This is usually referred to as Weyl's lemma. Furthermore, inverses of $K$-quasiconformal mappings are also $K$-quasiconformal, and the composition of a $K_1$-quasiconformal mapping and a $K_2$-quasiconformal mapping is $K_1K_2$-quasiconformal. We will also need the fact that quasiconformal mappings preserve sets of area zero, in the sense that if $E \subset U$ is measurable, then $m(E)=0$ if and only if $m(f(E))=0$, where $m$ is the two-dimensional Lebesgue measure.

The following theorem is of central importance in the theory of quasiconformal mappings.

\begin{theorem}[Measurable Riemann mapping theorem]
\label{MRMT}
Let $U$ be a domain in $\RiemannSphere$ and let $\mu:U \to \mathbb{D}$ be a measurable function with $\|\mu\|_\infty <1$. Then there exists a quasiconformal mapping $f$ on $U$ such that $\mu=\mu_f$, i.e.
$$\partial_{\overline{z}}f=\mu \, \partial_z f$$
almost everywhere on $U$. Moreover, the map $f$ is unique up to post-composition by a conformal map, in the sense that a quasiconformal mapping $g$ on $U$ satisfies $\mu_g=\mu=\mu_f$ if and only if $f \circ g^{-1} : g(U) \to f(U)$ is conformal.
\end{theorem}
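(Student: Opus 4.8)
The theorem has two parts --- existence and uniqueness --- and I would establish uniqueness first, since the construction in the existence proof uses it.

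\textbf{Uniqueness.} Suppose $f,g$ are quasiconformal on $U$ with $\mu_f = \mu_g = \mu$ almost everywhere, and set $F := f\circ g^{-1}\colon g(U)\to f(U)$. Writing $f = F\circ g$ and applying the chain rule --- legitimate almost everywhere because quasiconformal maps lie in $W^{1,2}_{loc}$ and carry null sets to null sets --- yields the composition formula
$$\mu_f \;=\; \frac{\mu_g + r\,(\mu_F\circ g)}{1 + r\,\overline{\mu_g}\,(\mu_F\circ g)},\qquad r:=\frac{\overline{\partial_z g}}{\partial_z g},\quad |r|=1.$$
Since $|\mu_g|<1$ a.e., the hypothesis $\mu_f=\mu_g$ a.e.\ forces $\mu_F\circ g=0$ a.e., hence $\mu_F=0$ a.e.\ (again because $g$ preserves null sets); so $F$ is $1$-quasiconformal and therefore conformal by Weyl's lemma, i.e.\ $f=F\circ g$ with $F$ conformal. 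The reverse implication is immediate from the chain rule. I expect this part to present no real difficulty.

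\textbf{Existence.} Extending $\mu$ by zero outside $U$, it is enough to produce a quasiconformal homeomorphism of $\RiemannSphere$ with the prescribed Beltrami coefficient and then restrict it to $U$; a standard localization reduces matters further to $\mu$ supported in a compact subset of $\mathbb{C}$ with $\|\mu\|_\infty = k<1$. Let $\mathcal{C}$ be the Cauchy transform and $\mathcal{B} := \partial_z\mathcal{C}$ the Beurling transform, so that $\partial_{\bar z}\mathcal{C}h = h$ and $\partial_z\mathcal{C}h = \mathcal{B}h$. Seeking a solution of the form $f = z + \mathcal{C}h$ converts the Beltrami equation $\partial_{\bar z}f = \mu\,\partial_z f$ into the integral equation $(I-\mu\mathcal{B})h = \mu$. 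The crucial analytic input is Calder\'on--Zygmund theory: $\mathcal{B}$ is an isometry of $L^2(\mathbb{C})$ and is bounded on $L^p(\mathbb{C})$ for every $1<p<\infty$, with $\|\mathcal{B}\|_{L^p}\to 1$ as $p\to 2$. Hence, for $p>2$ sufficiently close to $2$, we have $k\,\|\mathcal{B}\|_{L^p}<1$, so $I-\mu\mathcal{B}$ is invertible on $L^p(\mathbb{C})$ by a Neumann series, and the resulting $h$ is supported in $\mathrm{supp}\,\mu$, hence lies in $L^p\cap L^2$. Then $f = z+\mathcal{C}h$ belongs to $W^{1,p}_{loc}$, satisfies $\partial_{\bar z}f = \mu\,\partial_z f$ a.e., and has dilatation bounded by $K := (1+k)/(1-k)$.

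\textbf{The main obstacle.} What remains --- and this is the step I expect to be hardest --- is to show that the constructed $f$ is genuinely a homeomorphism of $\RiemannSphere$, which is not visible from the $L^p$ construction. I would first handle smooth compactly supported $\mu$, where $f = z + \mathcal{C}h$ is smooth and proper with Jacobian $|\partial_z f|^2(1-|\mu|^2)>0$ and is therefore a diffeomorphism. For general $\mu$, choose smooth $\mu_n\to\mu$ a.e.\ with $\|\mu_n\|_\infty\le k$; the corresponding normalized solutions $f_n = z+\mathcal{C}h_n$ are then $K$-quasiconformal homeomorphisms, so by compactness of the family of suitably normalized $K$-quasiconformal maps a subsequence converges locally uniformly to a $K$-quasiconformal homeomorphism. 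Since $h_n\to h$ in $L^p$ (continuity of $(I-\mu\mathcal{B})^{-1}$ together with dominated convergence in the coefficient $\mu_n$), this limit coincides with $z+\mathcal{C}h = f$, so $f$ is the required homeomorphism, with $\mu_f = \mu$. An alternative bypassing this discussion altogether is Sto\"{\i}low factorization --- every $W^{1,2}_{loc}$ solution of a Beltrami equation with bounded dilatation factors as a holomorphic function composed with a quasiconformal homeomorphism --- which reduces existence to the construction of some $W^{1,2}_{loc}$ solution, already carried out above.
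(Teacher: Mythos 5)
This is one of the results the paper explicitly imports as classical background: Theorem \ref{MRMT} is stated in Section \ref{Sec1} without proof (it is the Morrey--Bojarski--Ahlfors--Bers theorem, for which the paper points to \cite{LEH}), so there is no in-paper argument to compare yours against. Taken on its own terms, your sketch is the standard and correct route: uniqueness via the composition formula for Beltrami coefficients plus Weyl's lemma, and existence via the Cauchy and Beurling transforms, the Neumann series for $(I-\mu\mathcal{B})^{-1}$ in $L^p$ with $p>2$ close to $2$, followed by smooth approximation and normality of $K$-quasiconformal families to get that $f$ is a homeomorphism. Two small caveats. First, your closing remark that Sto\"{\i}low factorization ``bypasses'' the homeomorphism issue is circular as stated: the Sto\"{\i}low factorization for solutions of a Beltrami equation asserts that any $W^{1,2}_{loc}$ solution is a holomorphic function composed with \emph{a homeomorphic solution}, so it presupposes exactly the existence you are trying to establish; the approximation argument you give first is the one that actually does the work. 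Second, the reduction from a domain $U\subset\RiemannSphere$ with $\mu$ possibly nonzero near $\infty$ to a compactly supported coefficient in $\mathbb{C}$ deserves a sentence (the usual device is either a M\"obius change of variable or splitting $\mu$ into a piece supported near $\infty$ and a compactly supported piece and composing the two solutions); you wave at it with ``a standard localization,'' which is acceptable in a sketch but is a genuine step. With those points acknowledged, the proposal is a faithful account of the classical proof.
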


The second part of this section deals with the properties of conformally removable sets that are needed for the proof of Theorem \ref{RemovableUnion}. First, we introduce the following definition.

\begin{definition}
Let $E \subset \mathbb{C}$ be compact. We say that $E$ is \textit{quasiconformally removable} if every homeomorphism of $\RiemannSphere$ which is quasiconformal outside $E$ is actually quasiconformal everywhere.
\end{definition}

Standard quasiconformal extension results (see e.g. \cite[Chapter II, Theorem 8.3]{LEH}) imply that quasiconformal removability is a local property.

\begin{proposition}
\label{QCextend}
Let $E \subset \mathbb{C}$ be compact. Then the following are equivalent :

\begin{enumerate}[\rm(i)]
\item For any open set $U$ with $E \subset U$, every homeomorphism $f:U \to f(U)$ which is quasiconformal on $U \setminus E$ is actually quasiconformal on the whole open set $U$;
\item $E$ is quasiconformally removable.
\end{enumerate}
\end{proposition}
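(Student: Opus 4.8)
The plan is to prove the equivalence of (i) and (ii) by showing that the ``global'' removability hypothesis in (ii) can always be localized, using the standard quasiconformal gluing/extension machinery. The nontrivial direction is (ii) $\Rightarrow$ (i): the reverse implication (i) $\Rightarrow$ (ii) is immediate since a homeomorphism of $\RiemannSphere$ which is quasiconformal off $E$ is in particular such a map on the open set $U = \RiemannSphere \setminus \{\infty\}$ (or all of $\RiemannSphere$ after a M\"obius adjustment), so applying (i) with that $U$ gives that it is quasiconformal everywhere.

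For (ii) $\Rightarrow$ (i), suppose $E \subset U$ with $U$ open and let $f : U \to f(U)$ be a homeomorphism that is quasiconformal on $U \setminus E$; we want $f$ to be quasiconformal on all of $U$. First I would reduce to a small neighborhood of $E$: pick a bounded open set $V$ with $E \subset V \subset \overline{V} \subset U$ and $\overline{V}$ compact. It suffices to show $f$ is quasiconformal on $V$, since quasiconformality is a local property and $f$ is already quasiconformal on the open set $U \setminus E \supset U \setminus V$. Now the idea is to modify $f$ outside $V$ so as to obtain a homeomorphism of the whole sphere that is quasiconformal outside $E$, then invoke (ii). Concretely, $f$ restricted to the annular region $V \setminus \overline{W}$, for a slightly smaller open set $W$ with $E \subset W \subset \overline{W} \subset V$, is a quasiconformal embedding of that ring domain; using the quasiconformal extension theorem for such collars (\cite[Chapter II, Theorem 8.3]{LEH}) one can extend the restriction of $f$ to $\RiemannSphere \setminus W$ to a quasiconformal homeomorphism $g$ of $\RiemannSphere \setminus W$ onto its image, agreeing with $f$ on a neighborhood of $\partial W$. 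Then define $F : \RiemannSphere \to \RiemannSphere$ by $F = f$ on $\overline{W}$ and $F = g$ on $\RiemannSphere \setminus W$; since $f$ and $g$ agree near $\partial W$, this $F$ is a well-defined homeomorphism of the sphere, and it is quasiconformal on $\RiemannSphere \setminus E$ (it is $f$, hence quasiconformal, on $W \setminus E$, and it is $g$, hence quasiconformal, on $\RiemannSphere \setminus \overline{W}$, and these overlap in an open neighborhood of $\partial W$). By hypothesis (ii), $F$ is quasiconformal on all of $\RiemannSphere$; in particular $f = F$ is quasiconformal on $W \supset E$, and combined with quasiconformality of $f$ on $U \setminus E$ this gives quasiconformality on $U$.

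The step I expect to be the main technical point is the construction of the quasiconformal extension $g$ across the region outside the collar, i.e.\ verifying that $f|_{\partial W}$ (more precisely $f$ on a thin ring neighborhood of $\partial W$ inside $V\setminus\overline W$) extends to a quasiconformal self-map of the complementary region with the prescribed boundary behavior. This is exactly where one uses the cited extension theorem from \cite{LEH}, after perhaps first replacing $W$ by a smoothly bounded (e.g.\ finite union of disks) open set containing $E$ so that $\partial W$ is a nice enough curve; one must also check that the image regions match up so that $F$ is a homeomorphism of the whole sphere and not merely of a subdomain, which is where choosing $\overline V$ compact and $f(U)$ open in $\RiemannSphere$ is used. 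Everything else is routine: locality of quasiconformality, the fact that two quasiconformal maps agreeing on an open set patch to a quasiconformal map, and compactness to pass between the various nested neighborhoods $E \subset W \subset V \subset U$.
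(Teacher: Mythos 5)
Your argument is correct and is essentially the same as what the paper intends: the paper offers no proof of this proposition, merely citing \cite[Chapter II, Theorem 8.3]{LEH} for precisely the collar-extension-and-gluing step that you identify and carry out as the main technical point. One cosmetic slip: $W\setminus E$ and $\RiemannSphere\setminus\overline{W}$ do not overlap; what makes $F$ quasiconformal on $\RiemannSphere\setminus E$ is that these two open sets, together with the open collar of $\partial W$ on which $F=f=g$ is quasiconformal, cover $\RiemannSphere\setminus E$.
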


A remarkable consequence of the measurable Riemann mapping theorem is that the notions of quasiconformal removability and conformal removability actually coincide.

\begin{proposition}
\label{CHremQCHrem}
A compact set $E \subset \mathbb{C}$ is quasiconformally removable if and only if it is conformally removable.
\end{proposition}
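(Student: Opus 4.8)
The plan is to prove the two implications separately, with the non-trivial direction being that conformal removability implies quasiconformal removability.

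First I would dispose of the easy direction: suppose $E$ is quasiconformally removable and let $f$ be a homeomorphism of $\RiemannSphere$ that is conformal outside $E$. Since conformal maps are $1$-quasiconformal, $f$ is quasiconformal outside $E$, hence quasiconformal on all of $\RiemannSphere$ by hypothesis. Then $\mu_f = 0$ almost everywhere on $\RiemannSphere \setminus E$; but a quasiconformal map of the sphere has $\mu_f \in W^{1,2}_{loc}$-compatible $L^\infty$ data, and since $E$ is quasiconformally removable it has area zero (this follows because a set of positive area is not quasiconformally removable, via the measurable Riemann mapping theorem applied to a Beltrami coefficient supported on a positive-area subset of $E$). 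Therefore $\mu_f = 0$ almost everywhere on $\RiemannSphere$, so by Weyl's lemma $f$ is conformal everywhere, i.e. a M\"obius transformation. Hence $E$ is conformally removable.

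For the converse, suppose $E$ is conformally removable and let $f:\RiemannSphere \to \RiemannSphere$ be a homeomorphism that is quasiconformal outside $E$. I want to produce a M\"obius transformation and conclude. Define a Beltrami coefficient $\mu$ on $\RiemannSphere$ by setting $\mu = \mu_f$ on $\RiemannSphere \setminus E$ and $\mu = 0$ on $E$; since $f$ is quasiconformal off $E$ we have $\|\mu_f\|_{L^\infty(\RiemannSphere \setminus E)} < 1$, so $\|\mu\|_\infty < 1$. By the measurable Riemann mapping theorem (Theorem \ref{MRMT}), there is a quasiconformal mapping $g$ of $\RiemannSphere$ with $\mu_g = \mu$. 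Now consider $h = f \circ g^{-1}$, which is a homeomorphism of $\RiemannSphere$. On $\RiemannSphere \setminus g(E)$, both $f$ and $g$ have the same Beltrami coefficient (in the sense that $\mu_f = \mu = \mu_g$ off $E$), so by the uniqueness part of Theorem \ref{MRMT}, $h = f \circ g^{-1}$ is conformal on $g(E)^c$; more carefully, $g$ restricted to $\RiemannSphere \setminus E$ and $f$ restricted to $\RiemannSphere \setminus E$ are two solutions of the same Beltrami equation on that open set, so their composition is conformal there. Thus $h$ is a homeomorphism of $\RiemannSphere$ that is conformal outside the compact set $g(E)$.

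The key step is then to invoke conformal removability of $g(E)$. This requires knowing that the image of a conformally removable set under a quasiconformal self-map of the sphere is again conformally removable — i.e. quasiconformal invariance of removability. I expect this to be the main point to get right: one shows it by the same MRMT-pullback argument (given a homeomorphism conformal outside $g(E)$, pull back through $g$ to get a homeomorphism quasiconformal outside $E$, and iterate), but since we are in the middle of proving the equivalence of the two notions of removability, I would instead argue directly. Concretely: $h$ is conformal outside $g(E)$; pre- and post-composing, $g^{-1} \circ (\text{M\"obius-type normalization})$... the cleanest route is to observe that $f = h \circ g$ where $g$ is globally quasiconformal, so it suffices to show $h$ is globally quasiconformal, and for that we repeat the construction one more time: solve the Beltrami equation with coefficient equal to $\mu_h$ off $g(E)$ and $0$ on $g(E)$ to get $\tilde g$ with $\tilde g \circ g$ quasiconformal on the sphere, reducing to the case where $h$ itself is conformal off $g(E)$ with the property that pulling everything back through $g$ makes it conformal off $E$; then conformal removability of $E$ forces the resulting map to be M\"obius. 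Unwinding, $f$ is the composition of M\"obius and quasiconformal maps, hence quasiconformal on all of $\RiemannSphere$. Therefore $E$ is quasiconformally removable, completing the proof.
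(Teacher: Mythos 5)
Your overall strategy (correct with the measurable Riemann mapping theorem, then invoke removability and Weyl's lemma) is the same as the paper's, but both halves have a gap as written.

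In the easy direction you need $m(E)=0$ before Weyl's lemma applies, and your justification of this fails: solving the Beltrami equation for a coefficient supported on a positive-area subset of $E$ produces a map that is \emph{globally} quasiconformal, so while it is a non-M\"obius homeomorphism conformal off $E$ (showing positive-area sets are not \emph{conformally} removable), it does not violate \emph{quasiconformal} removability, which only asks that the map be quasiconformal on the sphere. The fact you want is true, but it needs more: the paper invokes the theorem of Kaufman and Wu, which for $m(E)>0$ produces a homeomorphism of $\RiemannSphere$, conformal off $E$, carrying a positive-area subset of $E$ onto a null set; such a map cannot be globally quasiconformal because quasiconformal maps preserve null sets. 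Some input of this kind is genuinely required.

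In the converse direction you composed the correcting map on the wrong side. Taking $g$ with $\mu_g=\mu_f$ off $E$ makes $h=f\circ g^{-1}$ conformal off $g(E)$, and you then need conformal removability of $g(E)$, not of $E$. Quasiconformal invariance of removability is Corollary \ref{InvQC}, which the paper deduces \emph{from} the present proposition, so that route is circular; and your attempted repair is vacuous (solving the Beltrami equation with coefficient $\mu_h$ off $g(E)$ does nothing since $\mu_h=0$ there, and $h\circ g=f$ is only quasiconformal, not conformal, off $E$, so pulling back through $g$ does not reduce to removability of $E$). The fix is the paper's one-line argument: choose the quasiconformal $F:\RiemannSphere\to\RiemannSphere$ whose coefficient on $f(\RiemannSphere\setminus E)$ is the pushforward of $\mu_f$ by $f$, so that $F\circ f$ is conformal on $\RiemannSphere\setminus E$ itself; conformal removability of $E$ then forces $F\circ f$ to be M\"obius, and $f=F^{-1}\circ(F\circ f)$ is globally quasiconformal.
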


\begin{proof}
Assume that $E$ is quasiconformally removable. First, we note that the area of $E$ must be zero. Indeed, this is a direct consequence of a result of Kaufman and Wu \cite{KAU} which states that if $E$ has positive area, then there is a homeomorphism of $\RiemannSphere$ which is conformal outside $E$ and maps a subset of $E$ of positive area onto a set of zero area. Now, let $f:\RiemannSphere \to \RiemannSphere$ be a homeomorphism which is conformal on the complement of $E$. Then in particular $f$ is quasiconformal outside $E$, so it must be quasiconformal on the whole sphere, by quasiconformal removability of $E$. But then it follows from Weyl's lemma that $f$ is a M\"{o}bius transformation. Thus $E$ is conformally removable.

Conversely, assume that $E$ is conformally removable and let $g:\RiemannSphere \to \RiemannSphere$ be any homeomorphism which is quasiconformal on $\RiemannSphere \setminus E$. By Theorem \ref{MRMT}, there exists a quasiconformal mapping $f:\RiemannSphere \to \RiemannSphere$ such that $f \circ g$ is conformal on $\RiemannSphere \setminus E$. Since $E$ is conformally removable, the map $f \circ g$ is a M\"{o}bius transformation and thus $g=f^{-1} \circ (f \circ g)$ is actually quasiconformal on the whole sphere. It follows that $E$ is quasiconformally removable.

\end{proof}

Note that as a corollary of Proposition \ref{QCextend} and Proposition \ref{CHremQCHrem}, we obtain that the union of two disjoint conformally removable sets is conformally removable. Another important consequence is that the notion of conformal removability is quasiconformally invariant.

\begin{corollary}
\label{InvQC}
Let $f$ be a quasiconformal mapping of the sphere with $f(\infty)=\infty$. If $E$ is conformally removable, then $f(E)$ is also conformally removable.
\end{corollary}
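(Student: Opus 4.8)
The plan is to deduce this directly from Proposition \ref{CHremQCHrem}, which lets us replace ``conformally removable'' by ``quasiconformally removable'' throughout, together with the fact that quasiconformal mappings form a group under composition (inverses and compositions of quasiconformal maps are again quasiconformal). First I would observe that the hypothesis $f(\infty)=\infty$ guarantees that $f(E)$ is again a compact subset of $\ComplexPlane$, so that it makes sense to ask whether $f(E)$ is removable.

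Next I would take an arbitrary homeomorphism $g:\RiemannSphere \to \RiemannSphere$ which is quasiconformal on $\RiemannSphere \setminus f(E)$, and consider the composition $h := g \circ f$, which is a homeomorphism of the sphere. On $\RiemannSphere \setminus E$, the map $f$ is quasiconformal and maps this set onto $\RiemannSphere \setminus f(E)$, on which $g$ is quasiconformal; hence $h$ is quasiconformal on $\RiemannSphere \setminus E$. Since $E$ is conformally removable, it is quasiconformally removable by Proposition \ref{CHremQCHrem}, so $h$ is in fact quasiconformal on the whole sphere. Then $g = h \circ f^{-1}$ is a composition of two quasiconformal self-maps of $\RiemannSphere$, hence quasiconformal everywhere. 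This shows $f(E)$ is quasiconformally removable, and invoking Proposition \ref{CHremQCHrem} once more gives that $f(E)$ is conformally removable.

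There is essentially no serious obstacle here; the argument is a routine ``transport of structure'' once Proposition \ref{CHremQCHrem} is available. The only points requiring a moment's care are the bookkeeping ensuring $f(E) \subset \ComplexPlane$ (which is exactly where $f(\infty)=\infty$ is used) and the observation that quasiconformality is preserved under composition and inversion, both of which were recorded in the preliminaries.
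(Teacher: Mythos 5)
Your argument is correct and is exactly the intended one: the paper leaves this corollary as an immediate consequence of Proposition \ref{CHremQCHrem}, and your transport-of-structure argument via $h = g \circ f$ and $g = h \circ f^{-1}$ is the standard way to carry it out. No issues.
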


In particular, every quasicircle (image of the unit circle $\mathbb{T}$ under a quasiconformal mapping of the sphere) is conformally removable.

We shall also need the following analogue of Proposition \ref{QCextend}.

\begin{proposition}
\label{Cextend}
Let $E \subset \mathbb{C}$ be compact. Then the following are equivalent :

\begin{enumerate}[\rm(i)]
\item For any open set $U$ with $E \subset U$, every homeomorphism $f:U \to f(U)$ which is conformal on $U \setminus E$ is actually conformal on the whole open set $U$;
\item $E$ is conformally removable.
\end{enumerate}

\end{proposition}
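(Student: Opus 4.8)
The plan is to adapt the proof of Proposition \ref{QCextend} to the conformal setting, using the correspondence between conformal and quasiconformal removability established in Proposition \ref{CHremQCHrem}. The implication (i) $\Rightarrow$ (ii) is immediate: taking $U=\RiemannSphere$ (or any open set containing $E$ and a point to be fixed) shows that every homeomorphism of the sphere which is conformal off $E$ is conformal everywhere, hence M\"obius. So the real content is (ii) $\Rightarrow$ (i).

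First I would assume $E$ is conformally removable and let $U$ be an open set with $E \subset U$ and $f : U \to f(U)$ a homeomorphism which is conformal on $U \setminus E$. By Proposition \ref{CHremQCHrem}, $E$ is quasiconformally removable, so Proposition \ref{QCextend} already tells us that $f$ is \emph{quasiconformal} on all of $U$; in particular $f \in W^{1,2}_{loc}(U)$ and $f$ satisfies a Beltrami equation $\partial_{\bar z} f = \mu_f\,\partial_z f$ a.e.\ on $U$. Since $f$ is conformal on $U \setminus E$ we have $\mu_f = 0$ a.e.\ on $U \setminus E$, and since $E$ has zero area (again by Proposition \ref{CHremQCHrem}, via the Kaufman--Wu result) we conclude $\mu_f = 0$ a.e.\ on $U$. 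Then Weyl's lemma applied to the quasiconformal map $f$ on the domain $U$ shows $f$ is conformal on $U$, which is exactly (i). Thus the proposition follows by combining Proposition \ref{QCextend}, Proposition \ref{CHremQCHrem}, and Weyl's lemma, with essentially no new work.

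The only mild subtlety — and the step I would be most careful about — is the reduction in (i) $\Rightarrow$ (ii): a homeomorphism of $\RiemannSphere$ conformal outside $E$ need not a priori fix any prescribed point, but conformality is a local and M\"obius-invariant notion, so after post-composing with a M\"obius transformation one may assume $f$ fixes $\infty$ and restricts to a homeomorphism of a bounded open neighbourhood $U$ of $E$ in $\mathbb{C}$; applying (i) to this $U$ makes $f$ conformal on $U$, hence conformal on all of $\RiemannSphere$, hence M\"obius. One should also note in passing that any homeomorphism which is conformal on the open set $U \setminus E$ is in particular $1$-quasiconformal there, so that Proposition \ref{QCextend} genuinely applies. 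I do not anticipate any serious obstacle: the statement is the exact conformal mirror of Proposition \ref{QCextend}, and the measurable Riemann mapping machinery has already done the heavy lifting.
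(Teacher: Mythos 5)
Your proof is correct and follows essentially the same route as the paper: reduce to quasiconformal removability via Proposition \ref{CHremQCHrem}, apply Proposition \ref{QCextend} to get quasiconformality of $f$ on $U$, and then use the zero-area property of $E$ together with Weyl's lemma to conclude conformality. The extra care you take with $(i) \Rightarrow (ii)$ is harmless but unnecessary, since one may simply take $U=\RiemannSphere$ in (i).
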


\begin{proof}
The fact that (i) implies (ii) is trivial. For the converse, assume that $E$ is conformally removable, and let $U$ be any open set with $E \subset U$ and $f:U \to f(U)$ be any homeomorphism which is conformal on $U \setminus E$. By Proposition \ref{CHremQCHrem}, the set $E$ is quasiconformally removable and therefore $f$ is quasiconformal on the whole open set $U$, by Proposition \ref{QCextend}. Since $E$ has zero area, the map $f$ is actually conformal on $U$, by Weyl's lemma.

\end{proof}

For the proof of Theorem \ref{RemovableUnion}, we need a version of Proposition \ref{Cextend} which holds without the assumption that $U$ contains $E$ in (i). We do not know if this is true in general (it would imply that the union of two conformally removable sets is conformally removable even if the sets are not disjoint). However, it is true with some additional hypothesis on the compact set $E$. This is the main result of this section.

\begin{theorem}
\label{ThmCextend}
Let $E \subset \mathbb{C}$ be compact, and assume that $E$ is either a quasicircle or a totally disconnected conformally removable set. Then for any open set $U$, every homeomorphism $f:U \to f(U)$ which is conformal on $U \setminus E$ is actually conformal on the whole open set $U$.
\end{theorem}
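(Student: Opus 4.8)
The plan is to reduce to the already-established Proposition~\ref{Cextend} by showing that the hypothesis ``$U \supset E$'' there can be dropped when $E$ is a quasicircle or a totally disconnected removable set, by a localization argument around the part of $E$ that actually meets $U$. Given an arbitrary open set $U$ and a homeomorphism $f : U \to f(U)$ conformal on $U \setminus E$, the obstruction is that $E \cap U$ may be a complicated relatively open-and-closed-in-nothing piece of $E$ with boundary points of $E$ sitting on $\partial U$; we cannot directly invoke removability of the full set $E$ since $f$ is only defined on $U$. The idea is to cover $E \cap U$ by small open balls $B_i$ with $\overline{B_i} \subset U$ and show that $f$ is conformal on each $B_i$; since conformality is local, this gives conformality on all of $U$.

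The key step is therefore: \textbf{given a ball $B$ with $\overline{B} \subset U$, show that $F := f|_B$ is conformal.} Here I would use the structural hypothesis on $E$. \emph{Case 1: $E$ is a quasicircle.} Then $E \cap \overline{B}$ is a compact subset of a quasicircle. I would argue that $f$ conformal on $B \setminus E$ together with the fact that quasicircles are removable forces $F$ to be quasiconformal on $B$: one can extend $F|_{B \setminus E}$ across $E \cap B$ using that $E$ locally separates the plane into two John (or quasidisk-type) pieces, so that $F$ restricted to a slightly smaller disk, after matching the two sides, is a homeomorphism quasiconformal off a quasiarc, hence quasiconformal by the local version (Proposition~\ref{QCextend} applied to the quasiarc, which inherits removability). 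Then zero area of $E$ and Weyl's lemma upgrade ``quasiconformal'' to ``conformal.'' \emph{Case 2: $E$ is totally disconnected and conformally removable.} Here $E \cap \overline{B}$ is again compact, totally disconnected, and conformally removable (removability passes to compact subsets — this follows from Proposition~\ref{Cextend}, since a compact subset of a removable set is removable), and crucially, because $E$ is totally disconnected, I can shrink $B$ to a ball $B'$ with $\overline{B'}\subset U$ whose boundary circle $\partial B'$ misses $E$ entirely (the set of radii $r$ for which $\partial B(x,r)$ meets $E$ has measure zero, as $E$ has zero area — more simply, total disconnectedness lets us route a circle through the complement of $E$). Then $E \cap \overline{B'} = E \cap B'$ is a compact subset of the open set $B'$, so Proposition~\ref{Cextend} applies directly to the homeomorphism $f|_{B'} : B' \to f(B')$ and yields conformality on $B'$, hence near the chosen point of $E$.

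The main obstacle is Case~1: making rigorous the claim that near a point of a quasicircle $E$, a homeomorphism of a ball $B$ conformal off $E \cap B$ is quasiconformal on $B$, when the quasicircle may exit and re-enter $B$ and $\partial B$ may intersect $E$ transversally in a messy Cantor-type set. The clean way around this is to first replace $B$ by a \emph{Jordan subdomain $D$, $\overline D \subset U$, bounded by a quasicircle that crosses $E$ only finitely many times} (available since a quasicircle is locally connected and one can build such $D$ using small cross-cuts), so that $E \cap D$ is a finite union of quasiarcs joining boundary points of $D$; each such quasiarc has a removable (even just: the closed quasiarc is a quasicircle after doubling, hence removable) structure, and $f$ conformal off a finite union of quasiarcs in $D$ — equivalently off their union, which is a compact subset of a quasicircle, hence conformally removable — is conformal on $D$ by Proposition~\ref{Cextend} once we know $\overline{(E\cap D)} \subset D$; arranging the latter (the endpoints of the arcs lie on $\partial D$, not in $D$) is exactly what the finite-crossing construction buys us. I expect the write-up of this geometric reduction to be the longest part; everything after it is a direct appeal to the propositions already proved in this section.
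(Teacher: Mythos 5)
Both halves of your reduction have a gap, and the quasicircle case cannot be repaired along the lines you propose. The obstruction is that Proposition~\ref{Cextend} (and likewise Proposition~\ref{QCextend}) requires the compact set to be \emph{contained in} the open set, and for a connected $E$ this containment is unachievable by any localization: if a subdomain $D$ meets the Jordan curve $E$ but does not contain all of it, then $\overline{E\cap D}$ must meet $\partial D$, since otherwise $E\cap D$ would be a nonempty proper clopen subset of the connected set $E$. You observe yourself that the endpoints of your quasiarcs lie on $\partial D$; that observation contradicts, rather than establishes, the hypothesis $\overline{E\cap D}\subset D$ that your final appeal to Proposition~\ref{Cextend} needs, and your earlier sketch (matching the two sides across a quasiarc and invoking Proposition~\ref{QCextend}) runs into exactly the same containment problem. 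The connectedness of a quasicircle is precisely why this case requires a different mechanism. The paper straightens $E$ globally: write $E=F(\mathbb{T})$ with $F$ quasiconformal on $\RiemannSphere$, use Theorem~\ref{MRMT} to produce a quasiconformal $g$ with $\mu_g=\mu_{F^{-1}\circ f^{-1}}$ on $f(U\setminus E)$, so that $g\circ f\circ F$ is a homeomorphism of $F^{-1}(U)$ conformal off an arc of the round circle $\mathbb{T}$; Morera's theorem on small disks then gives conformality there, hence quasiconformality of $f$ on $U$, and Weyl's lemma (plus the fact that quasicircles have zero area) finishes.

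In the totally disconnected case your strategy (localize and apply Proposition~\ref{Cextend}) is the right one, but the device for achieving compact containment --- a round circle $\partial B'$ missing $E$ --- is not justified by either reason you give. Zero area only yields, via integration in polar coordinates, that almost every circle centered at $x$ meets $E$ in a set of zero \emph{length}, not in the empty set; the set of radii $r$ with $\partial B(x,r)\cap E\neq\emptyset$ is the distance set $\{|z-x| : z\in E\}$, which for a totally disconnected compact set can contain a whole interval (build a Cantor set from nested generations of shrinking squares whose radial projections all cover $[1,2]$). And ``routing a circle through the complement'' is available for Jordan curves, not for round circles. The correct repair is purely topological and is what the paper does: by Lemma~\ref{LemmaTotDisc} (zero-dimensionality of $E$), split $E=E_1\cup E_2$ into disjoint closed sets with $E_1\supset\{z\in E:\operatorname{dist}(z,\mathbb{C}\setminus U)\geq\epsilon\}$ and $E_2\supset E\setminus U$; then $E_1$ is a removable compact subset of the open set $U\setminus E_2$, $f$ is conformal on $(U\setminus E_2)\setminus E_1$, and Proposition~\ref{Cextend} applies there to give conformality on a neighborhood of any prescribed point of $U$. (Your remark that removability passes to compact subsets is fine, but it follows directly from the definition, not from Proposition~\ref{Cextend}.)
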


For the proof, we need the following elementary lemma from point-set topology.

\begin{lemma}
\label{LemmaTotDisc}
Let $S$ be a totally disconnected compact Hausdorff space. Suppose that $C_1$ and $C_2$ are two disjoint closed subsets of $S$. Then there exist disjoint closed subsets $S_1$ and $S_2$ of $S$ such that $S=S_1 \cup S_2$, $C_1 \subset S_1$ and $C_2 \subset S_2$.
\end{lemma}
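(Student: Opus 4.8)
The plan is to exploit the standard fact that in a totally disconnected compact Hausdorff space, the connected components are singletons and coincide with the quasicomponents; equivalently, the clopen sets separate points, and more strongly they separate disjoint closed sets. First I would recall (or re-derive) that in a compact Hausdorff space the quasicomponent of a point $x$ — the intersection of all clopen sets containing $x$ — equals its connected component; since $S$ is totally disconnected, this intersection is $\{x\}$. The key consequence I would extract is the following separation statement: if $x \in S$ and $C$ is a closed set with $x \notin C$, then there is a clopen set $V$ with $x \in V$ and $V \cap C = \emptyset$. This follows because the clopen sets containing $x$ form a filtered family whose intersection $\{x\}$ misses the compact set $C$, so by compactness finitely many of them already have intersection disjoint from $C$, and a finite intersection of clopen sets is clopen.

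Next I would upgrade this to separate the two disjoint closed sets $C_1, C_2$. Fix the closed set $C_2$. For each point $x \in C_1$, the previous step gives a clopen set $V_x \ni x$ with $V_x \cap C_2 = \emptyset$. The family $\{V_x : x \in C_1\}$ is an open cover of the compact set $C_1$, so finitely many $V_{x_1}, \dots, V_{x_n}$ cover $C_1$. Set $S_1 := V_{x_1} \cup \cdots \cup V_{x_n}$. Then $S_1$ is clopen (a finite union of clopen sets), contains $C_1$, and is disjoint from $C_2$. Finally put $S_2 := S \setminus S_1$, which is also clopen, contains $C_2$, and satisfies $S = S_1 \cup S_2$ with $S_1 \cap S_2 = \emptyset$; being clopen, both $S_1$ and $S_2$ are in particular closed, as required.

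The only genuinely non-formal ingredient is the identification of quasicomponents with components in a compact Hausdorff space (or, what amounts to the same thing here, the fact that clopen sets separate points in a totally disconnected compact Hausdorff space); everything after that is a routine compactness argument, so I expect no real obstacle. If one prefers to avoid invoking the component/quasicomponent theorem as a black box, the separation statement "$x \notin C$ closed $\Rightarrow$ a clopen neighborhood of $x$ misses $C$" can be proved directly: one shows that the intersection $Q$ of all clopen sets containing $x$ is connected (if $Q = A \sqcup B$ with $A, B$ closed in $Q$, hence in $S$, and $x \in A$, then by normality separate $A, B$ by disjoint open sets $U_A \supset A$, $U_B \supset B$; the clopen sets containing $x$ together with the complement of $U_A \cup U_B$ form a family of closed sets with empty total intersection, so finitely many have intersection inside $U_A \cup U_B$, and intersecting that finite collection with the clopen cover produces a clopen set contained in $U_A$, hence missing $B$, contradicting $B \subset Q$), whence $Q = \{x\}$ by total disconnectedness, and then compactness finishes as above.
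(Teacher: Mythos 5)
Your proof is correct and follows essentially the same route as the paper: both arguments produce, for each $x\in C_1$, a clopen neighborhood of $x$ disjoint from $C_2$, cover $C_1$ by finitely many of these using compactness, and take $S_1$ to be their (clopen) union and $S_2$ its complement. The only difference is cosmetic — the paper cites zero-dimensionality of $S$ as a black box, while you derive the needed clopen separation from the quasicomponent-equals-component theorem, which is just the standard proof of that same fact.
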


\begin{proof}
The assumptions on $S$ imply that it is zero-dimensional, i.e. it has a basis consisting of clopen sets. It follows that for any $x \in C_1$, there is a clopen set $O_x \subset S \setminus C_2$ with $x \in O_x$. Since $C_1$ is compact, there exist $x_1,\dots,x_n \in C_1$ such that $C_1 \subset \cup_{j=1}^n O_{x_j} :=S_1$. The result then follows by setting $S_2:= S \setminus S_1$.
\end{proof}

We can now prove Theorem \ref{ThmCextend}.

\begin{proof}
Let $U$ be open and let $f:U \to f(U)$ be a homeomorphism which is conformal on $U \setminus E$.

Assume first that $E$ is a quasicircle. In this case, the result follows from a simple application of Morera's theorem and the measurable Riemann mapping theorem. Write $E=F(\mathbb{T})$, where $F: \RiemannSphere \to \RiemannSphere$ is quasiconformal. By the measurable Riemann mapping theorem, there is a quasiconformal mapping $g:\RiemannSphere \to \RiemannSphere$ such that
$$\mu_{F^{-1} \circ f^{-1}} = \mu_g$$
on $f(U \setminus E)$. Then $g \circ f \circ F$ is a homeomorphism on $F^{-1}(U)$ which is conformal on $F^{-1}(U) \setminus F^{-1}(E)$. But $F^{-1}(E)=\mathbb{T}$, so that $g \circ f \circ F$ is conformal everywhere on $F^{-1}(U)$.

Indeed, let $z_0$ be any point in $F^{-1}(U)$ and let $r>0$ such that $\mathbb{D}(z_0,r) \subset F^{-1}(U)$. Then $g \circ f \circ F$ is continuous on $\mathbb{D}(z_0,r)$ and holomorphic everywhere on that disk except maybe on a circular arc. By Morera's theorem, the map $g \circ f \circ F$ is holomorphic everywhere on $\mathbb{D}(z_0,r)$.

This implies that $f$ is quasiconformal on $U$. Since $E$ is a quasicircle, its area must be zero, so that by Weyl's lemma, the map $f$ is conformal everywhere on $U$. This completes the proof in the case where $E$ is a quasicircle.

Suppose now that $E$ is totally disconnected and conformally removable. Fix $z_0 \in U$ and let $r>0$ such that $\overline{\mathbb{D}}(z_0,r) \subset U$. Define
$$C_1:= \{z \in E : \operatorname{dist}(z,\mathbb{C} \setminus U) \geq \epsilon\}$$
and $C_2:=E \setminus U$, where $\epsilon>0$ is smaller than the distance between $\overline{\mathbb{D}}(z_0,r)$ and the complement of $U$. Then $C_1$ and $C_2$ are two disjoint closed subsets of $E$, so by Lemma \ref{LemmaTotDisc} there exist two disjoint closed subsets of $E$, say $E_1$ and $E_2$, such that $E=E_1 \cup E_2$, $C_1 \subset E_1$ and $C_2 \subset E_2$. Then $E_1$ is conformally removable. Moreover, since $f$ is conformal on $(U \setminus E_2) \setminus E_1$ and $E_1$ is a compact subset of the open set $U \setminus E_2$, it follows from Proposition \ref{Cextend} that $f$ is conformal on $U \setminus E_2$, a set containing the disk $\mathbb{D}(z_0,r)$. As $z_0 \in U$ was arbitrary, we get that $f$ is conformal everywhere on $U$.
\end{proof}

\section{Proof of Theorem \ref{RemovableUnion}}
\label{Sec2}
We now have the required preliminaries for the proof of Theorem \ref{RemovableUnion}.

Let $E$ be a compact plane set of the form
$$E=\bigcup_{j=1}^\infty \Gamma_j \cup \bigcup_{k=1}^\infty C_k \cup \bigcup_{l=1}^\infty \{z_l\},$$
where each $\Gamma_j$ is a quasicircle, each $C_k$ is a Cantor set and each $z_l$ is a complex number. We want to show that $E$ is conformally removable if and only if every $C_k$ is conformally removable.

\begin{proof}
If $E$ is conformally removable, then clearly every $C_k$, as a compact subset of $E$, is also conformally removable.

Conversely, assume that every $C_k$ is conformally removable. Let $f:\RiemannSphere \to \RiemannSphere$ be a homeomorphism which is conformal on $D:= \RiemannSphere \setminus E$. Let $D'$ be the union of all open subsets of $\RiemannSphere$ on which $f$ is conformal, so that $D \subset D'$. Let us prove that $D'=\RiemannSphere$, i.e. $E':=\RiemannSphere \setminus D'=\emptyset$. Assume for a contradiction that $E'$ is not empty. Since $E'$ is the countable union of the closed sets $\Gamma_j \cap E'$, $C_k \cap E'$ and $\{z_l\} \cap E'$, one of these sets must have nonempty interior in $E'$, by the Baire category theorem. Suppose that there is some $j \in \mathbb{N}$ such that $\Gamma_j \cap E'$ has nonempty interior in $E'$. The argument is the same if $C_k \cap E'$ has nonempty interior in $E'$ for some $k$ or if $\{z_l\} \cap E'$ has nonempty interior in $E'$ for some $l$. Then there is an open set $U \subset \mathbb{C}$ such that $U \cap E' \neq \emptyset$ and $U \cap E' \subset \Gamma_j \cap E'$. Since $f:\RiemannSphere \to \RiemannSphere$ is a homeomorphism which is conformal on $U \setminus E' = U \setminus (\Gamma_j \cap E') \supset U \setminus \Gamma_j$ and $\Gamma_j$ is a quasicircle, we obtain from Theorem \ref{ThmCextend} that $f$ is conformal on $U$. Hence $f$ is conformal on $D' \cup U$, an open set properly containing $D'$, contradicting the maximality of the latter. Therefore $D'=\RiemannSphere$ and $f$ is conformal on the whole Riemann sphere. This shows that $E$ is conformally removable.

\end{proof}

\begin{cor}
Let $\Omega$ be a circle domain whose boundary is the union of countably many circles, Cantor sets and singletons. Then the boundary of $\Omega$ is conformally removable if and only if any Cantor set contained in the boundary of $\Omega$ is conformally removable. In other words, \textbf{\emph{(B)}} and \textbf{\emph{(C)}} in Conjecture \ref{RigidityConjecture} are equivalent.
\end{cor}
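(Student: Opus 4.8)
The plan is to deduce the corollary directly from Theorem~\ref{RemovableUnion} by identifying the boundary of $\Omega$ with a compact set $E$ of the precise form treated there. First I would observe that $\partial\Omega$ is compact, being a closed subset of the sphere, and that after a preliminary M\"{o}bius transformation we may assume $\infty \notin \partial\Omega$, so that $\partial\Omega \subset \ComplexPlane$. By hypothesis $\partial\Omega$ is a countable union of circles, Cantor sets and singletons; since genuine (round) circles are in particular quasicircles, this is exactly a decomposition $E = \bigcup_j \Gamma_j \cup \bigcup_k C_k \cup \bigcup_l \{z_l\}$ as in Theorem~\ref{RemovableUnion}. (If only finitely many pieces of a given type occur, one simply repeats one of them, or pads with a singleton already in $E$, to get honest countable unions; this is a cosmetic point.) Hence Theorem~\ref{RemovableUnion} applies and tells us that $\partial\Omega = E$ is conformally removable if and only if every $C_k$ is conformally removable.

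The remaining point is to match ``every $C_k$ is conformally removable'' with statement \textbf{(C)}, namely that \emph{any} Cantor set contained in $\partial\Omega$ is conformally removable. One direction is immediate: if every Cantor set in $\partial\Omega$ is removable, then in particular each $C_k$ is. For the converse I would argue as follows. Suppose every $C_k$ is conformally removable, and let $C \subset \partial\Omega$ be an arbitrary Cantor set. Then $C$ is a compact subset of $E = \bigcup_j \Gamma_j \cup \bigcup_k C_k \cup \bigcup_l \{z_l\}$. A Cantor set is totally disconnected, so $C$ cannot meet any circle $\Gamma_j$ in a set with nonempty interior relative to $C$ (a nontrivial subarc is connected); more carefully, $C \cap \Gamma_j$ is a compact subset of a circle, hence either is totally disconnected and nowhere dense in $\Gamma_j$ or contains an arc --- but it cannot contain an arc since $C$ is totally disconnected. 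Running the Baire category argument inside $C$ exactly as in the proof of Theorem~\ref{RemovableUnion}: $C = \bigcup_j (\Gamma_j \cap C) \cup \bigcup_k (C_k \cap C) \cup \bigcup_l (\{z_l\}\cap C)$ is a countable union of closed sets, and since $C$ has no isolated points, the singleton pieces and (by total disconnectedness) the circle pieces have empty interior in $C$; therefore some $C_k \cap C$ has nonempty interior in $C$, i.e. there is a relatively open nonempty $V \subset C$ with $V \subset C_k$. Actually this only handles one relatively open piece, so to conclude removability of all of $C$ one should instead just invoke Theorem~\ref{RemovableUnion} again with the roles reversed, writing $C$ itself as the union of its (countably many, by the above) relatively clopen pieces each contained in some removable $C_k$; each such piece is a closed subset of a removable set, hence removable, and then $C$, being a countable union of removable totally disconnected compact sets and no circles, is removable by Theorem~\ref{RemovableUnion}. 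This gives \textbf{(C)}, and the equivalence of \textbf{(B)} and \textbf{(C)} follows.

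The main obstacle, and the step requiring the most care, is precisely this last reduction of an arbitrary Cantor subset $C$ of $\partial\Omega$ to the given building blocks $C_k$: one must be sure that the topological structure of $C$ (total disconnectedness, no isolated points) forces $C$ to be covered, up to a countable decomposition into relatively clopen pieces, by the removable Cantor pieces $C_k$ rather than leaking onto the circles $\Gamma_j$ in an uncontrolled way. Once that point-set topology is in place, everything else is a direct citation of Theorem~\ref{RemovableUnion} (which itself already packages the Baire category argument and the conformal extension result Theorem~\ref{ThmCextend}). I would also remark, as the paper's surrounding text does, that the hypothesis on $\partial\Omega$ --- that it be expressible as such a countable union --- is automatic whenever the point boundary components accumulating on circles form an $F_\sigma$ set, in particular when there are finitely many circles.
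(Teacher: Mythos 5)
Your first half is fine: moving $\infty$ into $\Omega$, observing that round circles are quasicircles, and padding the decomposition so that $\partial\Omega$ is literally a set $E$ of the form in Theorem~\ref{RemovableUnion} gives \textbf{(B)} $\Leftrightarrow$ (each $C_k$ is removable), and the implication \textbf{(C)} $\Rightarrow$ (each $C_k$ is removable) is indeed trivial, so you have \textbf{(C)} $\Rightarrow$ \textbf{(B)}. The gap is in the remaining direction, where you try to show directly that removability of the $C_k$ forces \textbf{(C)}. Both topological claims you lean on there are false: $\Gamma_j \cap C$ can very well have nonempty interior in $C$ --- take $C$ to be a Cantor subset of a single boundary circle $\Gamma_j$, so that $\Gamma_j \cap C = C$ --- and consequently an arbitrary Cantor set $C \subset \partial\Omega$ need not admit any decomposition into relatively clopen pieces each contained in some $C_k$; it can be entirely disjoint from all of the $C_k$. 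So the ``roles reversed'' invocation of Theorem~\ref{RemovableUnion} never gets off the ground.

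The repair is much shorter and is what makes the corollary a genuinely \emph{direct} consequence of Theorem~\ref{RemovableUnion}: compact subsets of conformally removable sets are conformally removable, since a homeomorphism of $\RiemannSphere$ conformal off the subset is in particular conformal off the larger set, hence M\"{o}bius. (This is the same observation the paper uses in the first line of the proof of Theorem~\ref{RemovableUnion}.) Therefore \textbf{(B)} $\Rightarrow$ \textbf{(C)} immediately, with no need to relate an arbitrary Cantor subset of $\partial\Omega$ to the building blocks $C_k$ at all. Combined with your correct chain \textbf{(C)} $\Rightarrow$ (each $C_k$ removable) $\Rightarrow$ \textbf{(B)}, this closes the equivalence; everything after ``For the converse I would argue as follows'' in your write-up should be deleted.
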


\begin{corollary}
Let $\Omega$ be a circle domain and let $P$ be the set of its point boundary components. Suppose that the subset $A$ of $P$ consisting of accumulation points of circles is an $F_\sigma$. Then \textbf{\emph{(B)}} and \textbf{\emph{(C)}} in Conjecture \ref{RigidityConjecture} are equivalent.
\end{corollary}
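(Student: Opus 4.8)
The plan is to reduce the final corollary to Theorem~\ref{MainThm1} (equivalently, to the preceding corollary whose hypothesis is that $\partial\Omega$ is a union of countably many circles, Cantor sets and singletons). So it suffices to show that the $F_\sigma$ hypothesis on $A$ forces $\partial\Omega$ into this form. First I would recall the basic structure: there are at most countably many circles $\Gamma_1,\Gamma_2,\dots$ in $\partial\Omega$ (an easy area/measure argument, already noted in the text), and $P=\partial\Omega\setminus\bigcup_j\Gamma_j$ is the set of point components. Writing $P=A\cup(P\setminus A)$, where $A$ consists of those point components that are accumulation points of circles, I would handle the two pieces separately.

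For $P\setminus A$: a point component $z\in P\setminus A$ is not an accumulation point of circles, so there is an open disk around $z$ meeting only finitely many circles (or none). Shrinking this disk, we may assume it is disjoint from all circles. Then within this disk $\partial\Omega$ is a relatively closed subset with empty interior consisting of points only, i.e. a totally disconnected compact set contained in a disk. By Lindel\"of / second countability, the open cover of $P\setminus A$ by such disks has a countable subcover, and each piece $\partial\Omega\cap(\text{disk})$ is a compact totally disconnected set. Every compact totally disconnected metric set with more than one point can be written as a finite or countable union of Cantor sets and singletons (points isolated within the set give singletons; the perfect kernel and the Cantor--Bendixson decomposition handle the rest, since a compact perfect totally disconnected metric space is a Cantor set, and a compact totally disconnected space with no isolated points but which is ``small'' decomposes further — more simply, any compact totally disconnected metric space embeds in a Cantor set and is itself a countable union of Cantor sets and singletons). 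Hence $\partial\Omega\cap(P\setminus A)$ is a countable union of Cantor sets and singletons.

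For $A$: by hypothesis $A=\bigcup_{n=1}^\infty A_n$ with each $A_n$ closed in $\partial\Omega$ (hence compact). Each $A_n$ is contained in $P$, so it is a compact totally disconnected set, and therefore — by the same point-set fact invoked above — a countable union of Cantor sets and singletons. Taking the union over $n$ keeps countability. Combining this with the decomposition of $P\setminus A$ and the countable family of circles, we conclude that $\partial\Omega$ is the union of countably many circles, countably many Cantor sets and countably many singletons, which is exactly the hypothesis of Theorem~\ref{MainThm1}. The conclusion that \textbf{(B)} and \textbf{(C)} are equivalent then follows immediately.

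The main obstacle I anticipate is the point-set topology lemma that a compact totally disconnected metric space is a countable union of Cantor sets and singletons; one must be a little careful because not every such space is itself a Cantor set (it may fail to be perfect) and the decomposition should be stated cleanly — I would prove it by a Cantor--Bendixson-type argument, peeling off the (at most countably many) isolated points at each stage, or simply by embedding the space as a closed subset of the standard Cantor set and observing that the perfect kernel is a Cantor set (or empty) while the scattered part is countable, hence a countable union of singletons. The only other point requiring mild care is the Lindel\"of reduction for $P\setminus A$, ensuring the local disks can be chosen disjoint from all circles simultaneously with a countable subcover; this is routine given that $\RiemannSphere$ is second countable. Neither step involves any analysis — the analytic content is entirely packaged in Theorem~\ref{MainThm1}.
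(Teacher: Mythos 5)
Your proposal is correct and follows essentially the same route as the paper: decompose $\partial\Omega$ into the countably many circles, the closed sets $F_k$ covering $A$, and a countable family of compact totally disconnected sets covering $P\setminus A$, then apply the Cantor--Bendixson theorem to reduce to the preceding corollary. The only (cosmetic) difference is that the paper covers $P\setminus A$ by the explicit closed sets $P_n=\{z\in P:\operatorname{dist}(z,\gamma_j)\geq 1/n \text{ for all } j\}$ rather than by a Lindel\"of argument with disks, which also sidesteps your minor compactness quibble about open versus closed disks.
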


\begin{proof}
If $A$ is the countable union of the closed sets $F_k$, $k \in \mathbb{N}$, then we can write $\partial \Omega$ as a countable union of closed sets
$$\partial \Omega = \bigcup_{j=1}^\infty \gamma_j \cup \bigcup_{k=1}^\infty F_k \cup \bigcup_{n=1}^\infty P_n,$$
where each $\gamma_j$ is a circle and
$$P_n:= \{z \in P : \operatorname{dist}(z,\gamma_j) \geq 1/n\, \, \mbox{for} \, \, j=1,2,\dots \} \qquad (n \in \mathbb{N}).$$
By the Cantor--Bendixson theorem, each $F_k$ and each $P_n$ can be written as the union of a Cantor set and a set which is at most countable. The result then follows from the previous corollary.

\end{proof}

We conclude this section with an example of a circle domain $\Omega$ whose boundary is not the union of countably many circles, Cantor sets and singletons.

\begin{example}
It is not difficult to construct inductively a circle domain $\Omega$ such that every $z \in \partial \Omega$ is an accumulation point of infinitely many circles. For example, consider the following construction, due to Lasse Rempe-Gillen.

At each inductive step $k$, we construct a finite collection $\mathcal{F}_k$ of pairwise disjoint circles, with $\mathcal{F}_k \subset \mathcal{F}_{k+1}$. Moreover, for each circle $\gamma \in \mathcal{F}_k$, we pick an open annulus $A_k(\gamma)$ surrounding $\gamma$ and contained in a $1/k$-neighborhood of $\gamma$, which separates $\gamma$ from the other circles of $\mathcal{F}_k$.

Start with $\mathcal{F}_1$ containing only one circle $\gamma_1$, with some annulus $A_1(\gamma)$ surrounding it. Then, inductively, for every circle $\gamma \in \mathcal{F}_k$, add small circles to $\mathcal{F}_{k+1}$, each within the inner curve of $A_k(\gamma)$, such that every point of $\gamma$ is within a distance of $1/k$ from one of these new circles. Then pick the annuli with the desired properties.

Let $K$ be the closure of the union of all the circles in the $\mathcal{F}_k$'s. Then it is easy to see that every component of $K$ is either a circle or a point (two distinct points in $K$ either belong to the same circle or are separated by some annulus). Letting $\Omega$ be the unbounded component of $\RiemannSphere \setminus K$ gives a circle domain with the required properties.

\begin{figure}[h!t!b]
\label{fig1}
\begin{center}
\includegraphics[width=6cm, height=6cm]{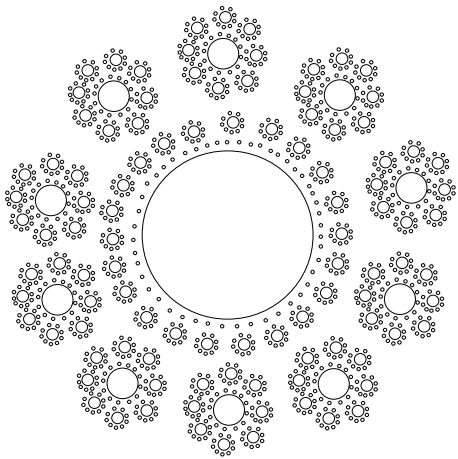}
  \caption{The compact set $K$.}
\end{center}
\end{figure}

Now, note that the boundary of such a circle domain $\Omega$ cannot be the union of countably many circles, Cantor sets and singletons. Indeed, assume for a contradiction that it is. Then one of these countably many circles, Cantor sets and singletons, say $F$, would have non-empty interior in $\partial \Omega$, by the Baire category theorem. It follows that there is an open set $U$ with $U \cap \partial \Omega \neq \emptyset$ and $U \cap \partial \Omega \subset F$. But then any point in $U \cap \partial \Omega$ is not an accumulation point of infinitely many circles, a contradiction.

\end{example}

\section{Trans-quasiconformal deformation of Schottky groups}
\label{Sec3}
Let $\Omega$ be a circle domain and let $\{\gamma_j\}_{j=1}^\infty$ be the collection of disjoint circles in $\partial \Omega$. For notational convenience, we assume that there are infinitely many boundary circles, although of course everything also works in the finite case. For $j \in \mathbb{N}$, denote by $R_j : \RiemannSphere \to \RiemannSphere$ the reflection across the circle $\gamma_j$ :
$$R_j(z)=a_j + \frac{r_j^2}{\overline{z-a_j}},$$
where $a_j$ is the center and $r_j$ is the radius of the circle $\gamma_j$.

\begin{definition}
The \textit{Schottky group} $\Gamma(\Omega)$ is the free discrete group of M\"{o}bius and anti-M\"{o}bius transformations generated by the family of reflections $\{R_j\}_{j \in \mathbb{N}}$.
\end{definition}

Thus $\Gamma(\Omega)$ consists of the identity map and all transformations of the form $R_{i_1}\circ \cdots \circ R_{i_k}$ where $k \in \mathbb{N}$, $i_1,\dots,i_k \in \mathbb{N}$ and $i_j \neq i_{j+1}$ for $j=1,\dots,k-1$.

Classical quasiconformal deformation of Schottky groups deals with Beltrami coefficients which are invariant under $\Gamma(\Omega)$. As far as we know, it was first introduced by Sibner \cite{SIB1} to show that a domain in $\RiemannSphere$ is conformally equivalent to a circle domain if and only if it is quasiconformally equivalent to a circle domain (cf. Proposition \ref{QCeqCeq}). See also \cite[Section 2]{SCH4}. Lastly, we mention that the method was used recently by Bonk, Kleiner and Merenkov to study quasisymmetric rigidity of Schottky sets, see \cite[Section 7]{BON}.

In this section, we introduce a generalization of classical quasiconformal deformation where Beltrami coefficients are replaced by so-called David coefficients. These are measurable functions bounded by one that are allowed to tend to one in a controlled way.

\begin{definition}
Let $U \subset \mathbb{C}$ be open. We say that a measurable function $\mu:U \to \mathbb{D}$ is a \textit{David coefficient} if there exist constants $M>0$, $\alpha>0$ and $0<\epsilon_0<1$ such that
\begin{equation}
\label{DavidCondition}
m(\{z \in U : |\mu(z)|>1-\epsilon\}) < M e^{-\frac{\alpha}{\epsilon}} \qquad (\epsilon<\epsilon_0),
\end{equation}
where $m$ is the two-dimensional Lebesgue measure.

Furthermore, an orientation-preserving homeomorphism $f$ on $U$ is called a \textit{David map} if $f$ belongs to the Sobolev space $W_{loc}^{1,1}(U)$ and satisfies the Beltrami equation
$$\partial_{\overline{z}}f=\mu \, \partial_z f$$
almost everywhere on $U$, for some measurable function $\mu : U \to \mathbb{D}$ satisfying (\ref{DavidCondition}). In this case, the function $\mu$ is called the \textit{David coefficient of} $f$ and is denoted by $\mu_f$.

\end{definition}

David coefficients and David maps were introduced by David \cite{DAV} for the study of the Beltrami equation in the degenerate case $\|\mu\|_\infty = 1$. More recently, they appeared to be quite useful in holomorphic dynamics, see e.g. \cite[Chapter 9]{BRF}.

It is well-known that David maps share some of the useful properties of quasiconformal mappings. For instance, they preserve sets of area zero in the sense that for any measurable set $E \subset U$, we have $m(E)=0$ if and only if $m(f(E))=0$. On the other hand, inverses of David maps may not be David; this is quite problematic in many situations. To circumvent this difficulty, one can replace (\ref{DavidCondition}) by the stronger condition
\begin{equation}
\label{StronglyDavidCondition}
m(\{z \in U : |\mu(z)|>1-\epsilon \}) \leq M e^{-\beta e^{\frac{\alpha}{\epsilon}}} \qquad (\epsilon>0)
\end{equation}
for some constants $M, \alpha, \beta>0$. We shall denote the corresponding measurable functions $\mu:U \to \mathbb{D}$ by \textit{strongly David coefficients} and the corresponding homeomorphisms $f$ on $U$ by \textit{strongly David maps}. Then inverses and compositions of strongly David maps are strongly David, see \cite[Section 11]{DAV}.

Lastly, we mention that (strongly) David coefficients and (strongly) David maps can also be defined on open subsets $U$ of the sphere $\RiemannSphere$ provided euclidean area is replaced by spherical area in (\ref{DavidCondition}) and (\ref{StronglyDavidCondition}).

The following is a generalization of the measurable Riemann mapping theorem (cf. Theorem \ref{MRMT}) for David maps.

\begin{theorem}[David integrability theorem \cite{DAV}]
\label{DRMT}
Let $U$ be a domain in $\RiemannSphere$ and let $\mu : U \to \mathbb{D}$ be a David coefficient on $U$. Then there exists a David map $f$ on $U$ such that $\mu=\mu_f$, i.e.
$$\partial_{\overline{z}}f=\mu \, \partial_z f$$
almost everywhere on $U$. Moreover, the map $f$ is unique up to post-composition by a conformal map, in the sense that a David map $g$ on $U$ satisfies $\mu_g=\mu=\mu_f$ if and only if $f \circ g^{-1} : g(U) \to f(U)$ is conformal.
\end{theorem}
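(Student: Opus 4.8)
The theorem is due to David \cite{DAV}; we outline the strategy. Extending $\mu$ by $0$ outside $U$ and restricting the resulting map, we may assume $U=\RiemannSphere$ for the existence statement. The plan is to approximate $\mu$ by bounded Beltrami coefficients, solve with Theorem \ref{MRMT}, and pass to the limit; the whole point is to control the approximating solutions uniformly. It is convenient to rephrase condition (\ref{DavidCondition}) in terms of the distortion $K_\mu(z):=\frac{1+|\mu(z)|}{1-|\mu(z)|}$: since $\{|\mu|>1-\epsilon\}=\{K_\mu>\frac{2}{\epsilon}-1\}$, the David condition is equivalent to the exponential integrability $\int e^{pK_\mu}\,dm<\infty$ for some $p>0$, and this is the form in which it feeds into the estimates. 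For $n\in\mathbb{N}$ set $\mu_n:=\mu$ on $\{|\mu|\leq 1-1/n\}$ and $\mu_n:=0$ elsewhere, so that $\|\mu_n\|_\infty<1$, $|\mu_n|\leq|\mu|$, and $\mu_n\to\mu$ almost everywhere; by Theorem \ref{MRMT} there is a quasiconformal $f_n$ on $\RiemannSphere$ with $\mu_{f_n}=\mu_n$, and we normalize all the $f_n$ to fix $0,1,\infty$.

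The main step, and the main obstacle, is to show that $\{f_n\}$ is equicontinuous with a modulus of continuity $\eta$ that depends only on the David constants $M,\alpha,\epsilon_0$ and satisfies $\eta(t)\to 0$ as $t\to 0$. In the quasiconformal case equicontinuity comes from the fact that a $K$-quasiconformal map distorts the modulus of a ring domain by at most the factor $K$; here $K_{\mu_n}\leq K_\mu$ is unbounded, but only logarithmically in measure, and the substitute is a lemma of the form $\operatorname{mod}(f_n(A))\geq\psi(\operatorname{mod}(A))$ for round annuli $A$, where $\psi$ is increasing, $\psi(t)\to\infty$ as $t\to\infty$, and $\psi$ depends only on the David constants; this is proved by the length--area method, integrating $e^{pK_\mu}$ over $A$. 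Combining it with the Teichm\"{u}ller and Gr\"{o}tzsch extremal-length inequalities and the three-point normalization forces $|f_n(z)-f_n(w)|$ to be small when $|z-w|$ is small, uniformly in $n$. This estimate is the technical heart of the theorem.

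Granting equicontinuity, Arzel\`{a}--Ascoli yields a subsequence $f_{n_k}\to f$ locally uniformly. One checks that $f$ is a homeomorphism: non-constancy is immediate from the normalization, while injectivity is delicate, since uniform limits of homeomorphisms may collapse continua; this is handled by a complementary modulus estimate showing that the $f_n$ do not shrink small disks to points in the limit (alternatively by a monotone--light factorization and Sto\"{i}low's theorem). For regularity, $\int_S J_{f_n}=\operatorname{area}(f_n(S))$ is bounded on compact $S$ (the $f_n$ being uniformly bounded on compacta), and from $|Df_n|^2\leq J_{f_n}(1+K_{\mu_n})$, Cauchy--Schwarz, and $K_\mu\in L^1_{loc}$ one gets a uniform bound on $\int_S|Df_n|$; pushing the estimate a little further, using the full exponential integrability rather than just $K_\mu\in L^1_{loc}$, gives equi-integrability of $\{Df_n\}$ and hence, by Dunford--Pettis, weak $L^1$ compactness. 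Identifying the weak $L^1$ limit of $Df_{n_k}$ with $Df$ (legitimate because $f_{n_k}\to f$ uniformly) shows $f\in W^{1,1}_{loc}$, and passing $\partial_{\overline{z}}f_{n_k}=\mu_{n_k}\,\partial_z f_{n_k}$ to the limit, with $\mu_{n_k}\to\mu$ almost everywhere and $|\mu_{n_k}|\leq 1$, gives $\partial_{\overline{z}}f=\mu\,\partial_z f$ almost everywhere.

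It remains to prove uniqueness. If $f$ and $g$ are David maps on $U$ with $\mu_f=\mu_g=\mu$, the plan is to show that $h:=f\circ g^{-1}$ is conformal on $g(U)$, whence $f=h\circ g$; conversely, if $f\circ g^{-1}$ is conformal then $\mu_f=\mu_g$ by the chain rule. The delicate point is that $g^{-1}$ need not be a David map, so the chain rule does not apply directly; one must verify that $h\in W^{1,1}_{loc}$, that $h$ satisfies Lusin's condition (N), and that $\partial_{\overline{z}}h=0$ almost everywhere, and then invoke a Weyl-type lemma to the effect that a $W^{1,1}_{loc}$ homeomorphism with a.e.\ vanishing $\overline{\partial}$-derivative satisfying condition (N) is holomorphic. (A convenient way to organize this is to establish uniqueness first within the strongly David class, where inverses and compositions are again strongly David and the chain rule applies verbatim, and then to remove the restriction via the approximation above.) I expect the equicontinuity estimate and this last point to be where essentially all the work lies.
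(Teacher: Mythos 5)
The paper does not prove this statement at all: it is quoted verbatim from David's paper \cite{DAV} and used as a black box (see also \cite[Chapter 9]{BRF} for an exposition). So there is nothing in the text to compare your argument against; judged on its own, your sketch is a faithful outline of David's actual proof. The reformulation of (\ref{DavidCondition}) as exponential integrability of the distortion $K_\mu$, the truncation $\mu_n$ solved by Theorem \ref{MRMT} with a three-point normalization, the equicontinuity via a modulus-of-annuli estimate $\operatorname{mod}(f_n(A))\geq\psi(\operatorname{mod}(A))$ proved by length--area, the injectivity of the limit, the passage to the limit in $W^{1,1}_{loc}$ via equi-integrability and Dunford--Pettis, and the identification of the uniqueness question with a Weyl-type lemma for $W^{1,1}_{loc}$ homeomorphisms satisfying condition (N) with vanishing $\overline{\partial}$-derivative --- these are exactly the steps of the original argument, and you correctly locate the technical weight in the equicontinuity estimate and in the uniqueness.

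One caveat: the parenthetical suggestion to ``establish uniqueness first within the strongly David class \dots and then remove the restriction via the approximation above'' does not work as stated. The approximation scheme constructs one particular solution; given an arbitrary David map $g$ with $\mu_g=\mu$, there is no mechanism for approximating $g$ itself by strongly David maps with comparable coefficients, so the uniqueness question is not reduced. The main route you describe for uniqueness (verifying $h=f\circ g^{-1}\in W^{1,1}_{loc}$, condition (N), and $\partial_{\overline{z}}h=0$ a.e., then applying the generalized Weyl lemma) is the correct one and should be kept as the actual argument rather than as a fallback.
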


A convenient way to think of Beltrami coefficients and, more generally, of strongly David coefficients is in terms of almost complex structures and pullbacks, as in \cite[Section 1.2]{BRF}.

\begin{definition}
Let $V \subset \mathbb{C}$ be open and let $\mu:V \to \mathbb{D}$ be measurable. If $f : U \to V$ is an orientation-preserving strongly David map, then one can define a measurable function $f^{*}(\mu):U \to \mathbb{D}$, called the \textit{pullback} of $\mu$ by $f$, by
$$f^{*}(\mu) := \frac{\partial_{\overline{z}}f + (\mu \circ f) \overline{\partial_z f}}{\partial_z f + (\mu \circ f)\overline{\partial_{\overline{z}}f}}.$$
\end{definition}

Note that this is well-defined since strongly David maps are differentiable almost everywhere and they preserve sets of area zero. Furthermore, we can also consider orientation-reversing strongly David maps $f : U \to V$. In this case, the strongly David coefficient of $f$ is defined by $\mu_f := \mu_{\overline{f}}$ and the pullback $f^{*}(\mu)$ by
$$f^{*}(\mu) = \frac{\overline{\partial_{z}f} + \overline{(\mu \circ f)} \partial_{\overline{z}} f}{\overline{\partial_{\overline{z}} f} + \overline{(\mu \circ f)}\partial_{z}f}.$$

With these definitions, the coefficient $\mu_f$ is simply the pullback of $\mu_0 \equiv 0$ by $f$. Moreover, pullbacks satisfy the natural property
$$(f \circ g)^{*}(\mu) = g^*(f^*(\mu)).$$

We are now ready to discuss coefficients which are invariant under the action of the Schottky group $\Gamma(\Omega)$.

\begin{definition}
We say that a measurable function $\mu:\RiemannSphere \to \mathbb{D}$ is \textit{invariant with respect to} $\Gamma(\Omega)$ if $T^*(\mu)=\mu$ almost everywhere on $\RiemannSphere$ for every $T \in \Gamma(\Omega).$ This is equivalent to
$$\mu = (\mu \circ T) \frac{\overline{\partial_z T}}{\partial_z T}$$
or
$$\mu = \overline{(\mu \circ T)} \frac{\partial_{\overline{z}} T}{\overline{\partial_{\overline{z}}T}}$$
depending on whether $T$ is M\"{o}bius or anti-M\"{o}bius.
\end{definition}
The whole idea of trans-quasiconformal deformation of Schottky groups is based on the fact that any strongly David homeomorphism of the sphere with coefficient invariant with respect to $\Gamma(\Omega)$ maps $\Omega$ onto another circle domain.

\begin{proposition}
\label{PropInvariant}
Let $f:\RiemannSphere \to \RiemannSphere$ be a strongly David map whose strongly David coefficient $\mu_f$ is invariant with respect to the Schottky group $\Gamma(\Omega)$. Then $f(\Omega)$ is a circle domain whose corresponding Schottky group is $f \Gamma(\Omega) f^{-1}$.
\end{proposition}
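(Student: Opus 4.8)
The plan is to reduce the statement to the classical fact that the reflection group structure is what forces boundary components to be circles, and then to transport that structure through $f$. The key observation is that invariance of $\mu_f$ with respect to $\Gamma(\Omega)$ means precisely that for every reflection $R_j$, the map $f \circ R_j \circ f^{-1}$ has the same strongly David coefficient as the identity on $f(\RiemannSphere) = \RiemannSphere$ (this is the content of the pullback identity $T^*(\mu_f) = \mu_f$ combined with $f^*(\mu_0) = \mu_f$ and the composition rule $(f \circ R_j \circ f^{-1})^*(\mu_0) = (f^{-1})^*(R_j^*(f^*(\mu_0)))$, keeping track of the orientation reversal since $R_j$ is anti-M\"obius). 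Since strongly David maps have strongly David inverses and compose to strongly David maps, $f \circ R_j \circ f^{-1}$ is a strongly David homeomorphism of the sphere whose coefficient is $0$ almost everywhere; by Weyl's lemma it is therefore an anti-M\"obius transformation $\tilde R_j$.

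First I would verify the coefficient computation carefully, since the orientation-reversing case of the pullback formula is where sign errors creep in; the upshot should be that $\mu_f$ invariant under $R_j$ is equivalent to $(f \circ R_j \circ f^{-1})^*(\mu_0) = \mu_0$ a.e. Next, observe that $\tilde R_j := f \circ R_j \circ f^{-1}$ is an anti-M\"obius involution (it is conjugate to an involution and anti-M\"obius), hence a reflection across some circle or line; call its fixed-point circle $\tilde\gamma_j$. Since $R_j$ fixes exactly $\gamma_j$ pointwise and $f$ is a homeomorphism, $\tilde R_j$ fixes exactly $f(\gamma_j)$ pointwise, so $f(\gamma_j) = \tilde\gamma_j$ is a genuine circle. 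This already shows every boundary circle of $\Omega$ is mapped to a circle.

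Then I would handle the remaining boundary components, i.e. the point components and the behavior "in the limit." The cleanest route: let $\Lambda$ be the limit set of $\Gamma(\Omega)$, so that $\partial\Omega \setminus \bigcup_j \gamma_j \subseteq \Lambda$ and $\Lambda$ has spherical area zero (the standard fact for Schottky groups, using that the $\gamma_j$ are disjoint circles); the point boundary components of $f(\Omega)$ sit inside $f(\Lambda)$, which again has area zero because strongly David maps preserve null sets. One then checks that a component of $\partial f(\Omega)$ is either some $\tilde\gamma_j$ or a single point, because $f$ conjugates the dynamics: $f \Gamma(\Omega) f^{-1}$ is generated by the reflections $\tilde R_j$ across the disjoint circles $\tilde\gamma_j$, its ordinary set is $f(\Omega \cup \bigcup_j \text{reflected copies})$, and the topology of the complement is preserved by the homeomorphism $f$. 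Hence $f(\Omega)$ is a circle domain and its associated Schottky group — the group generated by reflections across the boundary circles $\tilde\gamma_j = f(\gamma_j)$ — is exactly $\langle \tilde R_j \rangle = f \Gamma(\Omega) f^{-1}$.

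The main obstacle I anticipate is not any single hard estimate but making the "limit set has area zero, hence point components survive as points" argument airtight in the strongly David (rather than quasiconformal) category: one must invoke that strongly David homeomorphisms preserve area-zero sets (stated in the excerpt) and that they are genuine homeomorphisms of $\RiemannSphere$, but one should be careful that $f \Gamma(\Omega) f^{-1}$ really is discrete and free — this follows formally from $f$ being a homeomorphism conjugating the action, so the group-theoretic structure is automatic, and the only analytic input needed is that each conjugated generator is an honest reflection, which is the Weyl's lemma step above. So the proof is essentially: (1) pullback bookkeeping $\Rightarrow$ each $f R_j f^{-1}$ has zero coefficient; (2) Weyl $\Rightarrow$ it is a reflection $\tilde R_j$ across a circle $\tilde\gamma_j = f(\gamma_j)$; (3) homeomorphism transport of the complement's topology plus area-zero of the (image of the) limit set $\Rightarrow$ $f(\Omega)$ is a circle domain with Schottky group $f\Gamma(\Omega)f^{-1}$.
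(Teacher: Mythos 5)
Your proposal is correct and follows essentially the same route as the paper: the pullback computation $(f \circ R_j \circ f^{-1})^{*}(\mu_0) = (f^{-1})^{*}(R_j^{*}(f^{*}(\mu_0))) = \mu_0$, the conclusion that each $f \circ R_j \circ f^{-1}$ is anti-M\"obius, and the identification of $f(\gamma_j)$ with its fixed-point set. The only differences are cosmetic: the paper invokes the uniqueness part of the David integrability theorem rather than Weyl's lemma to conclude that a strongly David map with vanishing coefficient is (anti-)M\"obius (safer, since David maps are a priori only $W^{1,1}_{loc}$), and your limit-set/area-zero discussion is unnecessary --- point boundary components go to points simply because $f$ is a homeomorphism of the sphere.
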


\begin{proof}
To prove that $f(\Omega)$ is a circle domain, it suffices to show that $f(\gamma_j)$ is a circle, for each circle $\gamma_j$ in $\partial \Omega$. Recall that $R_j : \RiemannSphere \to \RiemannSphere$ denotes the reflection across the circle $\gamma_j$. Also, note that since strongly David maps are closed under inversions and compositions, we have that $f \circ R_j \circ f^{-1}$ is an orientation-reversing strongly David map. Moreover, if $\mu_0 \equiv 0$, then
$$(f \circ R_j \circ f^{-1})^{*}(\mu_0) = (f^{-1})^{*}(R_j^*(f^*(\mu_0))) = (f^{-1})^{*}(f^*(\mu_0)) = \mu_0,$$
 where we used the fact that $\mu_f=f^*(\mu_0)$ is invariant with respect to $\Gamma(\Omega)$. In other words, the coefficient of the strongly David map $f \circ R_j \circ f^{-1}$ is zero almost everywhere, which implies that $f \circ R_j \circ f^{-1}$ is anti-M\"{o}bius, by the uniqueness part of Theorem \ref{DRMT}. Now, note that $\gamma_j$ is the fixed point set of $R_j$, so that $f(\gamma_j)$ is the fixed point set of the anti-M\"{o}bius transformation $f \circ R_j \circ f^{-1}$. It follows that $f(\gamma_j)$ must be a circle and $f \circ R_j \circ f^{-1}$ is the reflection across this circle.

 Finally, the fact that the Schotty group of $f(\Omega)$ is $f \Gamma(\Omega) f^{-1}$ follows directly from the fact that it is generated by the family of reflections $\{f \circ R_j \circ f^{-1}\}_{j \in \mathbb{N}}$.
\end{proof}

Now, let $\Omega$ be a circle domain, let $P$ be the set of its point boundary components and set $\Omega':=\Omega \cup P$. Suppose that $\mu:\Omega' \to \mathbb{D}$ is a measurable function.

\begin{definition}
We define the \textit{invariant extension} $\tilde{\mu}$ of $\mu$ to $\RiemannSphere$ by

\begin{displaymath}
\tilde{\mu}(w) = \left\{ \begin{array}{ll}
(T^{-1})^*(\mu)(w) & \textrm{if $w \in T(\Omega')$ for some $T \in \Gamma(\Omega)$}\\
0 & \textrm{otherwise,}\\
\end{array} \right.
\end{displaymath}
for $w \in \RiemannSphere$.
\end{definition}

Note that $\tilde{\mu} = \mu$ on $\Omega'$ and that $|\tilde{\mu}(T(z))|=|\mu(z)|$ for all $z \in \Omega'$ and all $T \in \Gamma(\Omega)$. In particular, we have $\|\tilde{\mu}\|_\infty = \|\mu\|_\infty$. Moreover, by construction, the function $\tilde{\mu}$ is invariant with respect to $\Gamma(\Omega)$.

The following result of Sibner is a nice application of invariant extensions of Beltrami coefficients.

\begin{proposition}[Sibner \cite{SIB1}]
\label{QCeqCeq}
Let $D$ be a domain in $\RiemannSphere$ which is a quasiconformally equivalent to a circle domain. Then $D$ is conformally equivalent to a circle domain.
\end{proposition}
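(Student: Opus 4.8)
The plan is to transport the complex structure of $D$ back to the circle domain, spread it over the whole sphere by an invariant extension, straighten it with the measurable Riemann mapping theorem, and then read off a conformal model of $D$ from the uniqueness part of that theorem.

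Concretely, suppose $\phi : D \to \Omega$ is a quasiconformal homeomorphism onto a circle domain $\Omega$, and let $P$ be the set of point boundary components of $\Omega$, $\Omega' := \Omega \cup P$. First I would take $\mu := \mu_{\phi^{-1}}$, the Beltrami coefficient of the quasiconformal map $\phi^{-1} : \Omega \to D$; it is defined almost everywhere on $\Omega$ and satisfies $\|\mu\|_\infty < 1$. Setting $\mu := 0$ on $P$ gives a measurable function $\mu : \Omega' \to \mathbb{D}$, and I would then form its invariant extension $\tilde{\mu}$ to $\RiemannSphere$, which by construction is invariant with respect to $\Gamma(\Omega)$ and satisfies $\|\tilde{\mu}\|_\infty = \|\mu\|_\infty < 1$.

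Next, by the measurable Riemann mapping theorem (Theorem \ref{MRMT}) there is a quasiconformal mapping $F : \RiemannSphere \to \RiemannSphere$ with $\mu_F = \tilde{\mu}$. Since a quasiconformal mapping is in particular a strongly David map and $\mu_F$ is invariant with respect to $\Gamma(\Omega)$, Proposition \ref{PropInvariant} shows that $F(\Omega)$ is a circle domain. Finally, $F|_{\Omega}$ and $\phi^{-1}$ are both quasiconformal mappings on $\Omega$ with the same Beltrami coefficient $\mu = \tilde{\mu}|_\Omega$, so the uniqueness part of Theorem \ref{MRMT} gives that $F \circ \phi = F|_\Omega \circ (\phi^{-1})^{-1} : D \to F(\Omega)$ is conformal. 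Hence $D$ is conformally equivalent to the circle domain $F(\Omega)$.

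There is no serious obstacle here: the real content is packaged in Proposition \ref{PropInvariant} (the quasiconformal case of which is Sibner's original observation). The only points to watch are that the invariant extension must be applied to the coefficient of $\phi^{-1}$, i.e. on the circle-domain side where the Schottky group lives, and that one keeps track of the direction of the conformal map produced by the uniqueness statement.
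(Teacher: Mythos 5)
Your proof is correct and is essentially the paper's own argument: both take the Beltrami coefficient of the quasiconformal map from the circle domain $\Omega$ to $D$, extend it invariantly under $\Gamma(\Omega)$, solve with the measurable Riemann mapping theorem, and invoke Proposition \ref{PropInvariant} together with the uniqueness clause of Theorem \ref{MRMT} to produce the conformal equivalence. The only difference is notational (the paper starts with $g:\Omega\to D$, i.e.\ your $\phi^{-1}$).
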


\begin{proof}
Suppose that $g:\Omega \to D$ is a quasiconformal mapping of a circle domain $\Omega$ onto $D$. Define $\mu : \Omega' \to \mathbb{D}$ by
\begin{displaymath}
\mu = \left\{ \begin{array}{ll}
\mu_g & \textrm{on $\Omega$}\\
0 & \textrm{on $P$,}\\
\end{array} \right.
\end{displaymath}
so that $\|\mu\|_\infty<1$. Then the invariant extension $\tilde{\mu}$ of $\mu$ also satisfies $\|\tilde{\mu}\|_\infty <1$. By the measurable Riemann mapping theorem, there is a quasiconformal mapping $f:\RiemannSphere \to \RiemannSphere$ with $\mu_f = \tilde{\mu}$, and since $\mu_f = \tilde{\mu}=\mu_g$ on $\Omega$, we get that $f \circ g^{-1}$ is conformal on $D$. Moreover, $(f \circ g^{-1})(D)=f(\Omega)$ is a circle domain, by Proposition \ref{PropInvariant}.
\end{proof}

\section{Proof of Theorem \ref{MainThm2}}
\label{Sec4}
In this section, we show that a circle domain is conformally rigid if and only if it is quasiconformally rigid. The precise definition of the latter is the following.

\begin{definition}
A circle domain $\Omega$ in $\RiemannSphere$ is said to be \textit{quasiconformally rigid} if every quasiconformal mapping of $\Omega$ onto another circle domain is the restriction of a quasiconformal mapping of the whole sphere.
\end{definition}

Before we proceed with the proof of Theorem \ref{MainThm2}, we need the following lemma.

\begin{lemma}
\label{ZeroArea}
Let $\Omega$ be a circle domain in $\RiemannSphere$. If $\Omega$ is quasiconformally rigid, then $\partial \Omega$ has zero area.
\end{lemma}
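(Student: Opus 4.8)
The plan is to argue by contraposition: assuming $\partial\Omega$ has positive area, I will construct a quasiconformal map of $\Omega$ onto another circle domain that does not extend to a quasiconformal homeomorphism of the sphere, thereby showing $\Omega$ is not quasiconformally rigid. The key tool is the Kaufman--Wu type phenomenon already invoked in the proof of Proposition~\ref{CHremQCHrem}: since $\partial\Omega$ has positive area and is the boundary of a domain, at least one boundary circle $\gamma_{j_0}$ has the property that the part of $\partial\Omega$ inside the disk bounded by $\gamma_{j_0}$, or rather a suitable fundamental-domain piece, carries positive area — more precisely, because $\partial\Omega$ is $\Gamma(\Omega)$-invariant up to the countably many images and has positive area, the intersection $\partial\Omega \cap \overline{\Omega}$ (the ``visible'' part of the boundary from $\Omega$) must itself have positive area, since $\partial\Omega$ is contained in the closure of the union of the $\Gamma(\Omega)$-translates of $\overline{\Omega'}$ together with the limit set, and the limit set of a Schottky group on which the domain is a circle domain has zero area.

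First I would make precise which portion of $\partial\Omega$ has positive area. Let $L$ be the limit set of the Schottky group $\Gamma(\Omega)$ and recall $\partial\Omega \subset \bigcup_{T\in\Gamma(\Omega)} T(\partial\Omega \cap \overline{\Omega'}) \cup L$, with the caveat that one must handle the point components $P$ carefully. If $L$ had positive area, then one could already deform: but in fact for a circle domain the complement of $\bigcup_{T} T(\Omega')$ is exactly $L$, and standard facts about such reflection groups (the nested-disk structure, diameters of the $k$-th generation disks shrinking) give $m(L)=0$ only under extra hypotheses — so I would instead split into two cases. Case 1: $m(L)>0$. Then I take any measurable $\mu$ supported on $L$ with $\|\mu\|_\infty<1$ that is ``genuinely quasiconformal'', extend it by zero to all of $\RiemannSphere$ (it is automatically $\Gamma(\Omega)$-invariant since $L$ is invariant and $\mu\equiv 0$ off $L$ — wait, invariance of the nonzero part needs the pullback formula, so instead I build $\mu$ on a fundamental-type subset of $L$ and invariant-extend). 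Solving the Beltrami equation gives a quasiconformal $f$ of the sphere which, by Proposition~\ref{PropInvariant}, maps $\Omega$ to a circle domain; but since $f$ is conformal on $\Omega$ (its coefficient vanishes there) it restricts to a conformal — in particular quasiconformal — map, so this case does not by itself break rigidity. This shows my first instinct is wrong and I must put the non-removable distortion on $\partial\Omega$ itself, not on $L$.

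So the correct route: since $m(\partial\Omega)>0$ and $m(L)=0$ would contradict this only if the translates dominate, I claim $m(\partial\Omega \cap \overline{\Omega}) > 0$; granting this, I invoke the Kaufman--Wu result to obtain a homeomorphism $h$ of $\RiemannSphere$, conformal off $\partial\Omega\cap\overline\Omega$, collapsing a positive-area subset to zero area. But $h$ restricted to $\Omega$ is conformal, hence quasiconformal, mapping $\Omega$ onto some domain $h(\Omega)$ which need not be a circle domain; to fix this I post-compose with a further quasiconformal correction coming from an invariant Beltrami coefficient as in Proposition~\ref{PropInvariant}, arranging that the composite sends $\Omega$ to a circle domain while the positive-area collapse persists. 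The resulting map $F|_\Omega$ is quasiconformal onto a circle domain, but it cannot extend to a quasiconformal self-map of the sphere, because a quasiconformal homeomorphism preserves sets of area zero (as recalled in Section~\ref{Sec1}), whereas $F$ sends a positive-area subset of $\partial\Omega$ to a null set. Hence $\Omega$ is not quasiconformally rigid.

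The main obstacle is the claim $m(\partial\Omega\cap\overline\Omega)>0$ and, intertwined with it, arranging that after applying the area-collapsing homeomorphism one can still realize the target as a \emph{circle} domain via an invariant-extension argument — the Kaufman--Wu map is not quasiconformal and is not compatible with the group structure, so one cannot simply feed its coefficient into Proposition~\ref{PropInvariant}. I expect the cleanest fix is to first reduce, via a preliminary quasiconformal change of coordinates supported near $\gamma_{j_0}$, to the situation where the relevant positive-area piece of $\partial\Omega$ lies in a round annular neighborhood of a single circle, apply Kaufman--Wu there, and then use a reflection/invariance argument adapted to that one circle rather than the full Schottky group; verifying that the boundary circles other than $\gamma_{j_0}$ remain circles (or can be restored to circles) under this localized surgery is the delicate point and will require the controlled-distortion estimates for $\Gamma(\Omega)$ together with Proposition~\ref{PropInvariant} applied to the subgroup generated by the remaining reflections.
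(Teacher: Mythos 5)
Your approach has a genuine gap that you yourself flag but do not resolve, and it is precisely the point where the whole difficulty of the lemma lives. First, a simplification you miss: since $\partial\Omega=\bigcup_j\gamma_j\cup P$ and the countably many circles are null sets, $m(\partial\Omega)>0$ is equivalent to $m(P)>0$, and $P\subset\Omega'$ lies in the fundamental piece of the Schottky action --- so the entire case analysis about the limit set is unnecessary. The real problem is with the Kaufman--Wu step. The Kaufman--Wu homeomorphism $h$ is conformal off $\partial\Omega$, hence conformal in a neighborhood of every boundary circle $\gamma_j$, so it sends each $\gamma_j$ to an analytic Jordan curve that is in general not round; $h(\Omega)$ is not a circle domain. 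To repair this you would have to post-compose with a map carrying $h(\Omega)$ onto a circle domain, but producing such a map is essentially a Koebe uniformization problem for a domain with possibly uncountably many boundary components, and even granting its existence there is no reason the composite would still collapse a positive-area subset of $\partial\Omega$ (the correction could undo the collapse) nor that the composite restricted to $\Omega$ remains quasiconformal. Your proposed ``localized surgery near one circle'' does not address this: the non-roundness is produced at \emph{every} boundary circle, not just one, and Proposition~\ref{PropInvariant} only applies to maps whose global coefficient is invariant under the group, which the Kaufman--Wu coefficient is not.

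The paper's proof uses a different mechanism, and the obstruction to extension is not area collapse but unbounded dilatation. One defines a \emph{strongly David} coefficient $\mu$ supported on $P$ near a Lebesgue density point $p$ of $P$, with $|\mu|=1-\tfrac{1}{n+1}$ on thin annuli $h(n+1)<|z-p|\le h(n)$, and $\mu\equiv 0$ on $\Omega$; the radii $h(n)$ are chosen against a truncation function $M(n)$ so that the invariant extension $\tilde\mu$ still satisfies the strongly David condition after summing the areas of all group translates (splitting the sum at index $M(n)$ and using that the reflections are area-decreasing on $\Omega'$). The David integrability theorem then produces a homeomorphism $f$ with $\mu_f=\tilde\mu$; it is conformal on $\Omega$ because $\tilde\mu$ vanishes there, it maps $\Omega$ onto a circle domain by Proposition~\ref{PropInvariant} (which is stated for strongly David maps exactly for this purpose), and $f|_\Omega$ admits no quasiconformal extension to the sphere because for every $k<1$ the set where $|\mu_f|>k$ has positive area (here the density point is used). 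This invariance-first architecture is what guarantees a circle-domain image while still breaking extendability, and it is the idea your proposal is missing.
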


\begin{proof}
This is precisely where we need to use trans-quasiconformal deformation of Schottky groups. First, assume that $\infty \in \Omega$, composing with a M\"{o}bius transformation if necessary. As before, denote by $\{\gamma_j\}_{j =1}^\infty$ and $P$ respectively the collection of circles and the set of point components in $\partial \Omega$, and set $\Omega' = \Omega \cup P$. Let $\Gamma(\Omega)$ be the Schottky group of $\Omega$, which we write as $\Gamma(\Omega)=\{T_j\}_{j \geq 0}$, where $T_0$ is the identity.

Assume that the boundary of $\Omega$ has positive area, so that $m(P)>0$. Note that the sets $\{T_j(\Omega')\}_{j=1}^\infty$ are pairwise disjoint and that their union is bounded, thus we have
$$\sum_{j=1}^{\infty}m(T_j(\Omega'))<\infty.$$
For $n \in \mathbb{N}$, let $M(n) \in \mathbb{N}$ be such that
$$\sum_{j=M(n)+1}^\infty m(T_j(\Omega'))< e^{-e^n}.$$
We can assume that $M:\mathbb{N} \to \mathbb{N}$ is strictly increasing.

We now define a David coefficient as follows. For $n \in \mathbb{N}$, let
$$h(n)=\frac{e^{-e^n/2}}{\sqrt{M(n)+1}}.$$
Then $h:\mathbb{N} \to (0,\infty)$ is strictly decreasing and $h(n) \to 0$ as $n \to \infty$. Now, let $p$ be a Lebesgue density point of $P$, and define $\mu: \Omega' \to \mathbb{D}$ by

\begin{displaymath}
\mu(z) = \left\{ \begin{array}{ll}
1-\frac{1}{n+1} & \textrm{if $z \in P$ and $h(n+1) < |z-p| \leq h(n) $, $n \in \mathbb{N}$}\\
0 & \textrm{otherwise.}\\
\end{array} \right.
\end{displaymath}
Then $\mu$ satisfies the strongly David condition (\ref{StronglyDavidCondition}) on $\Omega'$. Indeed, for all $n \in \mathbb{N}$, we have
\begin{eqnarray*}
m(\{z \in \Omega' : |\mu(z)|>1-1/n\}) &=& m(P \cap \overline{\mathbb{D}}(p,h(n))) \\
&\leq& \pi h(n)^2\\
&=& \pi \frac{e^{-e^n}}{M(n)+1} \leq \pi e^{-e^{n}}.\\
\end{eqnarray*}

Now, consider the invariant extension $\tilde{\mu}$ of $\mu$. Let us check that $\tilde{\mu}$ is strongly David on $\RiemannSphere$. First, since $|\tilde{\mu}(T_j(z))|=|\mu(z)|$ for all $z \in \Omega'$ and all $j \geq 0$, we have, for $n \in \mathbb{N}$,
\begin{eqnarray*}
m(\{z \in \RiemannSphere : |\tilde{\mu}(z)| > 1-1/n\}) &=& m \left( \bigcup_{j=0}^\infty T_j\left(\{z \in \Omega': |\mu(z)|>1-1/n\}\right)\right)\\
&=& \sum_{j=0}^\infty m(T_j(\{z \in \Omega': |\mu(z)|>1-1/n\})).\\
&=& S_1+S_2,\\
\end{eqnarray*}
where
$$S_1:=\sum_{j=0}^{M(n)} m(T_j(\{z \in \Omega': |\mu(z)|>1-1/n\}))$$
and
$$S_2:=\sum_{j=M(n)+1}^\infty m(T_j(\{z \in \Omega': |\mu(z)|>1-1/n\})).$$

To estimate $S_1$, note that each $T_j$ is area-decreasing on $\Omega'$, in the sense that $m(T_j(E)) \leq m(E)$ whenever $E \subset \Omega'$ is measurable. This follows from a simple calculation involving the change of variable formula. Thus the first sum is less than
$$(M(n)+1) m(\{z \in \Omega': |\mu(z)|>1-1/n\}) \leq (M(n)+1) \pi \frac{e^{-e^n}}{M(n)+1} = \pi e^{-e^n}.$$
On the other hand, the second sum $S_2$ is less than
$$\sum_{j=M(n)+1}^{\infty} m(T_j(\Omega'))<e^{-e^n},$$
by definition of $M(n)$. Combining these two estimates together, we get, for each $n \in \mathbb{N}$,
$$m(\{z \in \RiemannSphere : |\tilde{\mu}(z)| > 1-1/n\}) \leq \pi e^{-e^n} + e^{-e^n} = (\pi+1)e^{-e^n},$$
from which it follows that $\tilde{\mu}$ is a strongly David coefficient on $\RiemannSphere$.

By Theorem \ref{DRMT}, there is a strongly David map $f:\RiemannSphere \to \RiemannSphere$ with $\mu_f = \tilde{\mu}$ almost everywhere. Since $\tilde{\mu}=\mu=0$ almost everywhere on $\Omega$, we get that $f$ is conformal and, in particular, quasiconformal on $\Omega$. Furthermore, since $\mu_f$ is invariant with respect to the Schottky group $\Gamma(\Omega)$, it follows from Proposition \ref{PropInvariant} that $f(\Omega)$ is a circle domain.

Finally, the map $f$ on $\Omega$ is clearly not the restriction of a quasiconformal mapping of the whole sphere. Indeed, this easily follows from the fact that for any $0 \leq k <1$, we can find some integer $n \in \mathbb{N}$ such that $1-1/(n+1)>k$, but then
$$|\mu_f| = |\tilde{\mu}|=|\mu| \geq 1-\frac{1}{n+1} > k$$
almost everywhere on $P \cap \overline{\mathbb{D}}(p,h(n))$, a set of positive area, since $p$ is a Lebesgue density point of $P$.

This shows that $\Omega$ is not quasiconformally rigid.

\end{proof}

We also need the following lemma.

\begin{lemma}[He--Schramm]
\label{LemmaHS}
Let $\Omega$ be a circle domain, let $P$ be the set of its point boundary components and let $\Omega'=\Omega \cup P$. Suppose that $g:\RiemannSphere \to \RiemannSphere$ is a quasiconformal mapping which maps $\Omega$ onto a circle domain $g(\Omega)$. Let $\mu$ be the Beltrami coefficient of $g$ restricted to $\Omega'$, and denote by $\tilde{\mu}$ its invariant extension to $\RiemannSphere$. Suppose that $h:\RiemannSphere \to \RiemannSphere$ is a quasiconformal mapping with $\mu_h = \tilde{\mu}$ almost everywhere. Then $g=T \circ h$ on $\Omega$ for some M\"{o}bius transformation $T$.
\end{lemma}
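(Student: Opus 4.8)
The plan is to reduce the statement, via the uniqueness in the measurable Riemann mapping theorem and Proposition \ref{PropInvariant}, to the assertion that a conformal map between two circle domains which is compatible with their reflection structures must be M\"obius, and then to invoke He--Schramm's rigidity theorem for that last point.

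\smallskip

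First I would observe that, by construction of the invariant extension, $\tilde\mu = \mu$ on $\Omega' \supset \Omega$, and by hypothesis $\mu = \mu_g$ on $\Omega$; hence $\mu_h = \tilde\mu = \mu_g$ almost everywhere on $\Omega$. The uniqueness part of Theorem \ref{MRMT} then gives that $\phi := g \circ h^{-1}$ is conformal on $h(\Omega)$. Next, a quasiconformal mapping is in particular a strongly David map, and $\mu_h = \tilde\mu$ is invariant with respect to $\Gamma(\Omega)$ by construction; so Proposition \ref{PropInvariant} applies to $h$, and $h(\Omega)$ is a circle domain whose Schottky group is $h\Gamma(\Omega)h^{-1}$, each $h \circ R_j \circ h^{-1}$ being the reflection in the circle $h(\gamma_j)$. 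By hypothesis $g(\Omega)$ is also a circle domain; write $R_j'$ for the reflection in the boundary circle $g(\gamma_j)$ and $\Gamma(g(\Omega)) = \langle R_j' \rangle$ for its Schottky group. Now $\phi$ is a homeomorphism of $\RiemannSphere$, conformal on $h(\Omega)$, which carries the boundary circle $h(\gamma_j)$ onto the boundary circle $g(\gamma_j) = \phi(h(\gamma_j))$; so by iterated Schwarz reflection (Koebe's classical argument, via Morera's theorem exactly as in the quasicircle case of the proof of Theorem \ref{ThmCextend}) the restriction $\phi|_{h(\Omega)}$ extends to a conformal map $\Phi$ on $\bigcup_{T \in \Gamma(\Omega)} h(T(\Omega))$ which conjugates the reflections of $\Gamma(h(\Omega))$ to those of $\Gamma(g(\Omega))$. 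If one knows that $\Phi$ is the restriction of a M\"obius transformation $T$, then $\phi = \Phi|_{h(\Omega)}$ is M\"obius and $g = \Phi \circ h = T \circ h$ on $\Omega$, which is the assertion.

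\smallskip

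The remaining point --- that the reflection-equivariant conformal map $\Phi$ between the two circle domains is M\"obius --- is the crux, and it is exactly the content of He--Schramm's rigidity theorem \cite{SCH1}; I would invoke it here, which is why the lemma is attributed to them. Concretely, one must extend $\Phi$ conformally across $\RiemannSphere \setminus \bigcup_{T} h(T(\Omega))$, a set consisting of the countably many circles $h(T(\gamma_j))$ (conformally removable, being quasicircles), together with the orbit of the point boundary components of $h(\Omega)$ and the limit set of $h\Gamma(\Omega)h^{-1}$; the analysis of fixed-point indices of \cite{SCH1}, or Schramm's transboundary-extremal-length argument \cite{SCH3}, produces the continuous --- hence conformal --- extension, and a conformal homeomorphism of $\RiemannSphere$ is M\"obius. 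I expect this step to be the only real difficulty; Steps above are routine applications of the preliminaries, and note that whenever the residual set happens to be of the form treated in Theorem \ref{RemovableUnion}, that theorem gives an independent route to the required extension.
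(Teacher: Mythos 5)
Your opening steps are fine and match the natural strategy: $\mu_h=\tilde\mu=\mu_g$ a.e.\ on $\Omega$ gives, via the uniqueness part of Theorem \ref{MRMT}, that $\phi:=g\circ h^{-1}$ is conformal on $h(\Omega)$; Proposition \ref{PropInvariant} applies to $h$ and makes $h(\Omega)$ a circle domain with Schottky group $h\Gamma(\Omega)h^{-1}$; and Schwarz reflection propagates $\phi$ equivariantly through the orbit of $h(\Omega)$. (Even here there is a small technical point you gloss over: other boundary components may accumulate on $h(\gamma_j)$, so $h(\Omega)\cup h(\gamma_j)\cup h(R_j(\Omega))$ need not be open and the reflection extension needs a little care.) The genuine gap is in your final step. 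You propose to conclude that the reflection-equivariant conformal map $\Phi$ is M\"obius by invoking ``He--Schramm's rigidity theorem \cite{SCH1}'' (or the transboundary extremal length argument of \cite{SCH3}). Those results apply only to circle domains with at most countably many boundary components. Lemma \ref{LemmaHS} is stated for \emph{arbitrary} circle domains, and such domains need not be conformally rigid --- the paper's whole subject is that rigidity can fail (complements of non-removable Cantor sets). So the ``rigidity'' you are invoking is false in the generality required, and appealing to it here is essentially assuming the hard part of Conjecture \ref{RigidityConjecture}; likewise the residual set is a limit-set-type compact set that is in general not of the form covered by Theorem \ref{RemovableUnion}.

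What actually closes the argument --- and the reason the paper's proof is a one-line citation to \cite[Lemma 2.2]{SCH4} --- is Sullivan's rigidity theorem for Kleinian groups, not circle-domain rigidity. The extra structure you have, and must use, is that $\phi$ is the restriction of a \emph{global quasiconformal} homeomorphism of the sphere and that the relevant Beltrami data are invariant under the discrete group $\Gamma(\Omega)$ of M\"obius and anti-M\"obius transformations. Sullivan's theorem (no invariant line fields / ergodicity of the action on the conservative part of the sphere, the dissipative part being exhausted by the orbit of a fundamental domain, here essentially $\Omega'$) forces the comparison map to have vanishing Beltrami coefficient almost everywhere on the complement of the orbit of $\Omega'$ as well; being a quasiconformal homeomorphism of $\RiemannSphere$ that is $1$-quasiconformal a.e., it is M\"obius by Weyl's lemma. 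Your write-up correctly isolates where the difficulty lies, but it points at the wrong theorem to resolve it, and with that substitution the proof does not go through as written.
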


\begin{proof}
The proof follows from classical results of Sullivan on Kleinian groups, see \cite[Lemma 2.2]{SCH4}.

\end{proof}

We can now proceed with the proof of Theorem \ref{MainThm2}.

\begin{proof}
Let $\Omega$ be a circle domain. We want to show that $\Omega$ is conformally rigid if and only if it is quasiconformally rigid.

Assume that $\Omega$ is conformally rigid, and let $f: \Omega \to f(\Omega)$ be quasiconformal, where $f(\Omega)$ is a circle domain. Set $\mu:=\mu_{f^{-1}}$ on $f(\Omega)$ and consider the invariant extension $\tilde{\mu}$ of $\mu$. By the measurable Riemann mapping theorem, there is a quasiconformal mapping $g:\RiemannSphere \to \RiemannSphere$ with $\mu_g=\tilde{\mu}$ almost everywhere on $\RiemannSphere$. Then $g \circ f$ is conformal on $\Omega$ and $g(f(\Omega))$ is a circle domain, by Proposition \ref{PropInvariant}. Since $\Omega$ is conformally rigid, it follows that $g \circ f$ is the restriction of a M\"{o}bius transformation and thus $f= g^{-1} \circ (g \circ f)$ is the restriction of a quasiconformal mapping of the whole sphere. This shows that $\Omega$ is quasiconformally rigid.

For the converse, assume that $\Omega$ is quasiconformally rigid, and let $f:\Omega \to f(\Omega)$ be conformal, where $f(\Omega)$ is a circle domain. Then in particular $f$ is quasiconformal on $\Omega$, hence is the restriction of a quasiconformal mapping $g$ of the whole sphere, by quasiconformal rigidity of $\Omega$. Now, by Lemma \ref{ZeroArea}, the boundary $\partial \Omega$ has zero area and thus the Beltrami coefficient of $g$ restricted to $\Omega'$ is zero almost everywhere. It follows that the map $h:\RiemannSphere \to \RiemannSphere$ of Lemma \ref{LemmaHS} is a M\"{o}bius transformation. Since $f=g=T \circ h$ on $\Omega$, we get that $f$ is the restriction of a M\"{o}bius transformation. This shows that $\Omega$ is conformally rigid.

\end{proof}

As mentioned in the introduction, it follows that rigid circle domains are quasiconformally invariant.

\begin{cor}
Let $\Omega$ be a circle domain and let $f$ be a quasiconformal mapping of the sphere which maps $\Omega$ onto another circle domain $f(\Omega)$. If $\Omega$ is conformally rigid, then $f(\Omega)$ is also conformally rigid.
\end{cor}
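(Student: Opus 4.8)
The plan is to reduce the statement immediately to Theorem \ref{MainThm2}, exploiting the fact that quasiconformal rigidity is formally far more robust than conformal rigidity: it is manifestly stable under pre-composition with quasiconformal self-maps of the sphere, whereas this is not at all obvious for conformal rigidity. So the whole point is to translate the hypothesis and conclusion through Theorem \ref{MainThm2}, do the easy invariance argument at the level of quasiconformal rigidity, and translate back.

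Concretely, I would first apply Theorem \ref{MainThm2} to replace the hypothesis: since $\Omega$ is conformally rigid, it is quasiconformally rigid. The goal then becomes to show that $f(\Omega)$ is quasiconformally rigid, after which a second application of Theorem \ref{MainThm2} yields that $f(\Omega)$ is conformally rigid, completing the proof.

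To establish quasiconformal rigidity of $f(\Omega)$, let $\phi : f(\Omega) \to \phi(f(\Omega))$ be an arbitrary quasiconformal map onto another circle domain, and consider $\phi \circ f$ restricted to $\Omega$. Since $f|_\Omega$ is quasiconformal and $\phi$ is quasiconformal, $\phi \circ f$ is a quasiconformal map of the circle domain $\Omega$ onto the circle domain $\phi(f(\Omega))$. By quasiconformal rigidity of $\Omega$, it is the restriction of a quasiconformal mapping $\Phi$ of the whole sphere. But then on $f(\Omega)$ we have $\phi = \Phi \circ f^{-1}$, and since $f$ is a quasiconformal homeomorphism of the whole sphere, so is $f^{-1}$, whence $\Phi \circ f^{-1}$ is a quasiconformal self-map of the sphere restricting to $\phi$ on $f(\Omega)$. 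Thus $f(\Omega)$ is quasiconformally rigid.

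The argument is essentially formal, so I do not expect any genuine obstacle once Theorem \ref{MainThm2} is in hand; the only routine points to verify are that quasiconformality composes and inverts correctly on the relevant sets (standard), and that the two passages through Theorem \ref{MainThm2} lose no information (which is exactly the content of that theorem). In short, the entire weight of the corollary rests on Theorem \ref{MainThm2}: modulo it, the corollary is just the remark that quasiconformal rigidity, unlike conformal rigidity, is visibly a quasiconformal invariant.
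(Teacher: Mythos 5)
Your proposal is correct and is exactly the argument the paper intends: the paper states the corollary as an immediate consequence of Theorem \ref{MainThm2}, the point being precisely that quasiconformal rigidity is manifestly invariant under quasiconformal self-maps of the sphere (via the composition $\phi \circ f$ and the extension $\Phi \circ f^{-1}$), so one passes to quasiconformal rigidity, does the formal invariance argument, and passes back. No gaps.
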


\section{Concluding remarks}
\label{Sec5}

We conclude the paper with a few remarks regarding the equivalence of \textbf{(A)} and \textbf{(B)} in Conjecture \ref{RigidityConjecture}.

Assume for simplicity that $\Omega$ is the complement in $\RiemannSphere$ of some Cantor set $E \subset \mathbb{C}$. Then $\Omega$ is a circle domain, and recall from the introduction that $E$ is conformally removable whenever $\Omega$ is conformally rigid; in other words, \textbf{(A)} implies \textbf{(B)}. It is not known whether the converse holds, even in this special case.

Suppose that $E$ is a counterexample to the converse, so that $E$ is conformally removable but $\Omega$ is not conformally rigid. Then in particular $E$ cannot be removable for conformal maps on its complement, in the sense that there exists a non-M\"{o}bius conformal map on $\Omega$. Examples of Cantor sets which are conformally removable but not removable for conformal maps on their complement were given by Ahlfors and Beurling in their seminal paper \cite{AHB}. More precisely, they proved that if $E$ is a Cantor subset of the unit circle $\mathbb{T}$ and if the inner logarithmic capacity of $\mathbb{T} \setminus E$ is less than one, then $E$ is not removable for conformal maps on $\Omega=\RiemannSphere \setminus E$ (see \cite[Theorem 14]{AHB}). Note that such compact sets $E$ are conformally removable by Morera's theorem. On the other hand, they all have finite length and thus their complements $\Omega$ must be conformally rigid, by \cite{SCH2}.

In this respect, it would be very interesting for the study of rigidity of circle domains to find examples of Cantor sets $E$ which

\begin{enumerate}[\rm(i)]
\item do not have $\sigma$-finite length;
\item are conformally removable;
\item are not removable for conformal maps on their complement
\end{enumerate}
and are fundamentally different from the Cantor sets of Ahlfors and Beurling.

Finally, note that if $E$ is as above, then there is a non-M\"{o}bius conformal map $f$ on $\Omega=\RiemannSphere \setminus E$, and $f$ does not extend to a homeomorphism of $\RiemannSphere$. It follows that $f$ must stretch some points of $E$ to nondegenerate continuum. The question is then whether it is possible for all of these continuum to be circles.

\bibliographystyle{amsplain}

\end{document}